\newtheoremstyle{slthm}%
{}{}%
{\slshape}{}%
{\bfseries}{\bfseries.}%
{ }%
{\thmname{#1}\thmnumber{ #2}\thmnote{ \normalfont{}({#3})}}
\theoremstyle{slthm}
\newenvironment{customthm}[1]
  {\innercustomthm}
  {\endinnercustomthm}
\newenvironment{custompro}[1]
  {\innercustompro}
  {\endinnercustompro}
\newenvironment{customcor}[1]
  {\innercustomcor}
  {\endinnercustomcor}
\newtheorem*{thm*}{Theorem}
\newtheorem{thm}{Theorem}[section]
\newtheorem{lem}[thm]{Lemma}
\newtheorem{pro}[thm]{Proposition}
\newtheorem{obs}[thm]{Observation}
\newtheorem{cor}[thm]{Corollary}
\newtheorem{cl}[thm]{Claim}
\newtheorem{ques}[thm]{Question}
\theoremstyle{definition}
\newtheorem{proc}[thm]{Procedure}
\newtheorem{conj}[thm]{Conjecture}
\newcommand{\N}{\mathbb{N}}
\newcommand{\E}{\mathbb{E}}
\newcommand{\R}{\mathbb{R}}
\newcommand{\Var}{\mathrm{Var}}
\newcommand{\Cov}{\mathrm{Cov}}
\renewcommand{\leq}{\leqslant}
\renewcommand{\geq}{\geqslant}
\renewcommand{\epsilon}{\varepsilon}
\renewcommand{\phi}{\varphi}
\newcommand{\defeq}{=}
\numberwithin{equation}{section}
\title{DP-Coloring  of Graphs from Random Covers}
\author{Anton Bernshteyn\thanks{Department of Mathematics, University of California, Los Angeles, CA, USA. E-mail: {\tt bernshteyn@math.ucla.edu}. Research of this author is partially supported by the NSF grant DMS-2045412 and the NSF CAREER grant DMS-2239187.} \and
Daniel Dominik\thanks{Department of Applied Mathematics, Illinois Institute of Technology, Chicago, IL, USA. E-mail: {\tt ddominik@hawk.iit.edu}} \and
Hemanshu Kaul\thanks{Department of Applied Mathematics, Illinois Institute of Technology, Chicago, IL, USA. E-mail: {\tt kaul@iit.edu}} \and
Jeffrey A. Mudrock\thanks{Department of Mathematics and Statistics, University of South Alabama, Mobile, AL, USA. E-mail: {\tt mudrock@southalabama.edu}}}
\date{\today}
\begin{document}
\maketitle 
\begin{abstract}
	\noindent DP-coloring (also called correspondence coloring) of graphs is a generalization of list coloring that has been widely studied since its introduction by Dvo\v{r}\'{a}k and Postle in $2015$. Intuitively, DP-coloring generalizes list coloring by allowing the colors that are identified as the same to vary from edge to edge. Formally, DP-coloring of a graph $G$ is equivalent to an independent transversal in an auxiliary structure called a DP-cover of $G$.  %Intuitively, a $k$-fold DP-cover of a graph $G$ is an assignment of lists of size $k$ to the vertices of $G$ where the names of colors vary from edge to edge.
    In this paper, we introduce the notion of random DP-covers and study the behavior of DP-coloring from such random covers. We prove a series of results about the probability that a graph is or is not DP-colorable from a random cover.  These results support the following threshold behavior on random $k$-fold DP-covers as $\rho\to\infty$ where $\rho$ is the maximum density of a graph: graphs are non-DP-colorable with high probability when $k$ is sufficiently smaller than $\rho/\ln\rho$, and graphs are DP-colorable with high probability when $k$ is sufficiently larger than $\rho/\ln\rho$.  Our results depend on $\rho$ growing fast enough and imply a sharp threshold for dense enough graphs.  For sparser graphs, we analyze DP-colorability in terms of degeneracy.  We also prove fractional DP-coloring analogs to these results.

	\medskip

	\noindent {\bf Keywords:}  graph coloring, DP-coloring, fractional DP-coloring, correspondence coloring, random cover, DP-threshold, random lifts.

	\noindent \textbf{Mathematics Subject Classification:} 05C15, 05C69, 05C80.

 \end{abstract}

 \section{Introduction} \label{intro}

\subsection{Basic terminology and notation}

    \noindent All graphs in this paper are finite and simple.  Unless otherwise noted we follow terminology from West~\cite{W}.  We will use $\N$ for the set $\{0,1,2,\ldots\}$ of natural numbers, $2^A$ for the power set of a set $A$, and $[k]$ for $\{1,\,\ldots,\,k\}$ with $[0]=\emptyset$.  Also, for $a$, $b\in\N$ with $a\leq b$, $[a:b]$ is the set $\{a,\ldots,b\}$. We use $K_n$ for the {complete graph} on $n$ vertices and $K_{m \times n}$ %. %with vertex set $\{v_1,\ldots,v_n\}$.
    %The \emph{complete multipartite graph} $K_{m\times n}$ is a graph with vertex set $\{v_{ij}\}_{i\in[m],j\in[n]}$, where two %, partitioned into $m$ sets of size $n$.  For 
    %vertices $v_{i_1,j_1}$ and $v_{i_2,j_2}$ are adjacent if and only if $i_1\neq i_2$. (That is, 
    for %$K_{m \times n}$ is
    the complete $m$-partite graph with parts of size $n$.  For a graph $G$ and $t\in\N$, $tG$ is the disjoint union of $t$ copies of $G$.

	Suppose $G=(V(G),E(G))$ is a graph and $v\in V(G)$ is a vertex. The \emph{neighborhood} of $v$ in $G$ is the set of all vertices adjacent to $v$, and it is denoted by $N_G(v)$. The \emph{degree} of $v$ is $|N_G(v)|$, and it is denoted by $\deg(v)$. %A graph $G$ is called \emph{balanced} if $d(G)=\rho(G)$. 
 For two disjoint sets of vertices $A$, $B\subseteq V(G)$, we will use $E_G(A,B)$ to denote the set of edges with one endpoint in $A$ and the other in $B$.

    The \emph{density} of a nonempty graph $G$, denoted by $d(G)$, is $|E(G)|/|V(G)|$.  The \emph{maximum density} of a nonempty graph $G$, denoted by $\rho(G)$, is $\max_{G'} d(G')$, where the maximum is taken over all nonempty subgraphs $G'$ of $G$. 
    
	A graph $G$ is said to be \emph{$d$-degenerate} if there exists some ordering of the vertices in $V(G)$ such that each vertex has at most $d$ neighbors among the preceding vertices.  The \emph{degeneracy of a graph $G$} is the smallest $d \in \N$ such that $G$ is $d$-degenerate. Note that the degeneracy $d$ of $G$ satisfies the bounds $\rho(G) \leq d \leq 2 \rho(G)$.

	\subsection{Graph coloring, list coloring, and DP-coloring}

		\noindent In classical vertex coloring, given a graph $G$, we assign to each vertex $v\in V(G)$ a color from $\N$.  More precisely, by a \emph{coloring} of $G$ we mean a function $\phi \colon V(G)\to\N$.  A \emph{$k$-coloring} is a coloring where %the codomain is
        $\phi(v) \in [k]$ for all $v \in V(G)$.  We say that a coloring $\phi \colon V(G)\to \N$ is \emph{proper} if for every $uv\in E(G)$, $\phi(u)\neq\phi(v)$.  We say that $G$ is \emph{$k$-colorable} if it has a proper $k$-coloring.  The \emph{chromatic number} of $G$, $\chi(G)$, is the smallest $k$ such that $G$ is $k$-colorable.

		List coloring is a generalization of vertex coloring that was introduced in the 1970s independently by Vizing \cite{V} and Erd\H{o}s, Rubin, and Taylor \cite{ERT}.  For a graph $G$, a \emph{list assignment} for $G$ is a function $L \colon V(G)\to 2^{\N}$; intuitively, $L$ maps each vertex $v\in V(G)$ to a list of allowable colors $L(v)\subseteq \N$.  An \emph{$L$-coloring} of $G$ is a coloring $\phi$ of $G$ such that $\phi(v)\in L(v)$ for each $v\in V(G)$.  A \emph{proper $L$-coloring} of $G$ is an $L$-coloring of $G$ that is a proper coloring.  A \emph{$k$-assignment} for $G$ is a list assignment $L$ for $G$ such that $|L(v)|=k$ for all $v\in V(G)$.  We say $G$ is \emph{$k$-choosable} if for every $k$-assignment $L$ for $G$, $G$ has a proper $L$-coloring.  The \emph{list chromatic number} of $G$, denoted by $\chi_{\ell}(G)$, is the smallest $k$ such that $G$ is $k$-choosable.  Clearly, $\chi(G)\leq\chi_\ell(G)$.

		The concept of DP-coloring was first put forward in 2015 by Dvo\v{r}\'{a}k and Postle~\cite{DP} under the name \emph{correspondence coloring}.  Intuitively, DP-coloring generalizes list coloring by allowing the colors that are identified as the same to vary from edge to edge.  Formally, for a graph $G$, a \emph{DP-cover} (or simply a \emph{cover}) of $G$ is an ordered pair $\mathcal{H}=(L,H)$, where $H$ is a graph and $L \colon V(G)\to 2^{V(H)}$ is a function satisfying the following conditions: 
		\begin{itemize}%\begin{enumerate}[label={\normalfont(\arabic*)}]
			\item $\{L(v) : v \in V(G)\}$ is a partition of $V(H)$ into $|V(G)|$ independent sets, %$\displaystyle V(H)=\bigcup_{v\in V(G)}L(v)$, and for $u,v\in V(G)$ with $u\neq v$, $L(u)\cap L(v)=\emptyset$,

			\item for every pair of adjacent vertices $u$, $v\in V(G)$, the edges in $E_H\left(L(u),L(v)\right)$ form a matching (not necessarily perfect and possibly empty), and

			\item $\displaystyle E(H) = \bigcup_{uv \in E(G)} E_{H}(L(u),L(v)).$
		\end{itemize}%\end{enumerate}
		%For simplicity we will refer to $\mathcal{H}$ as a cover instead of a DP-cover.
        Note that some definitions of a DP-cover require each $L(v)$ to be a clique (see, e.g., \cite{BKP}), but our definition requires each $L(v)$ to be an independent set of vertices in $H$.  We will refer to $L(v)$ as the \emph{list} of $v$.  %When $|L(v)|=k$, we name the vertices in $L(v)$ so that $L(v)=\left\{(v,i):i\in[k]\right\}$.

		Suppose $\mathcal{H}=(L,H)$ is a cover of a graph $G$.  A \emph{transversal} of $\mathcal{H}$ is a set of vertices $T\subseteq V(H)$ containing exactly one vertex from each list $L(v)$. %; that is, $|U\cap L(v)|=1$ for all $v\in V(G)$.
        A transversal $T$ is said to be \emph{independent} if $uv\not\in E(H)$ for all $u$, $v\in T$, i.e., if $T$ is an independent set in $H$.  If $\mathcal{H}$ has an independent transversal $T$, then $T$ is said to be a \emph{proper $\mathcal{H}$-coloring} of $G$, and $G$ is said to be \emph{$\mathcal{H}$-colorable}.

		A \emph{$k$-fold cover} of $G$ is a cover $\mathcal{H}=(L,H)$ such that $|L(v)|=k$ for all $v\in V(G)$.  We will use the term \emph{cover size} to refer to the value of $k$ in a $k$-fold cover.  The \emph{DP-chromatic number} of a graph $G$, $\chi_{DP}(G)$, is the smallest $k \in \N$ such that $G$ is $\mathcal{H}$-colorable for every $k$-fold cover $\mathcal{H}$ of $G$. %\footnote{To simplify statements in this paper, we  allow nonpositive $k$.  The \emph{empty cover of $G$} is $\mathcal{H}=(L,H)$ with $L(v)=\emptyset$ for all $v\in V(G)$, and $H$ is the empty graph. For $k\leq0$ we define the $k$-fold cover $\mathcal{H}$ of $G$ to be the empty cover of $G$.  Notice that $G$ is not $\mathcal{H}$-colorable for any non-empty graph $G$.}.
        Since for any $k$-assignment $L$ for $G$, there exists a $k$-fold cover $\mathcal{H}$ of $G$ such that $G$ is $L$-colorable if and only if it is $\mathcal{H}$-colorable~\cite{DP}, we know that
		\begin{equation*}
			\chi(G)\,\leq\,\chi_{\ell}(G)\,\leq\,\chi_{DP}(G).
		\end{equation*}
		Many classical upper bounds on $\chi_\ell(G)$ hold for $\chi_{DP}(G)$ as well \cite{DP, B, B3}, but there are also some differences \cite{BKdiff}.  For example, the first named author~\cite{B} showed that for a graph $G$ with average degree $d$, $\chi_{DP}(G)=\Omega(d/\ln d)$. On the other hand, by a celebrated result of Alon~\cite{A}, such graphs satisfy $\chi_\ell(G)=\Omega(\ln d)$, and this bound is in general best possible. %as $d \rightarrow \infty$. \todo{AB: Should we state this using density/max. density instead of average degree? Also, Alon's result is not actually in contrast to mine---what's important is that complete bipartite graphs have $\chi_\ell(G) = O(\ln d)$.}%Include the results for average degree, add a comment about their equivalent max density values.  Fix "contrast"

		A \emph{full cover} of $G$ is a cover $\mathcal{H}=(L,H)$ such that for any $uv\in E(G)$, the matching between $L(u)$ and $L(v)$ is perfect.  It follows that a full cover of a connected graph $G$ must be a $k$-fold cover for some $k\in\N$. It is clear that in the definition of the DP-chromatic number, it is enough to only consider full $k$-fold covers.

		 %If $W$ is a subset of $V(H)$, then the \emph{subcover of $\mathcal{H}$ induced by $W$} as $\mathcal{H}_W = \mathcal{H}_{G[W]}$, where $G[W]$ is the subgraph of $G$ induced by $W$.

	\subsection{Random DP-covers}\label{subsec:randomCover}

		\noindent The DP-chromatic number, similarly to other variants of the chromatic number, captures the extremal behavior of a graph $G$ with respect to DP-coloring. This extremal perspective looks for the value $k_0 \in \N$ such that $G$ is $\mathcal{H}$-colorable for every $k$-fold cover $\mathcal{H}$ with $k\geq k_0$, and $G$ is not $\mathcal{H}$-colorable for some $k$-fold cover $\mathcal{H}$ for each $k<k_0$.  However, for a given graph $G$, one can also ask what DP-coloring behavior $G$ exhibits with a high (or low) proportion of DP-covers of a given cover size.  This notion of high (or low) proportion of DP-covers can be naturally formalized by considering a probability distribution on the set of all such covers.  In this paper, we initiate this study by considering full DP-covers generated uniformly at random, and asking the natural probabilistic questions of the likelihood that the cover does or does not have an independent transversal, and whether a graph shows asymptotically almost sure transition in the behavior of its DP-coloring over these random covers.

		Similar probabilistic questions have also been studied for list coloring of graphs.  The list assignments of a given graph $G$ are generated uniformly at random from a palette of given colors, and the primary question is whether there is a threshold size of the assignments that shows a transition in the list colorability of the graph (parameterized by either the number of vertices or the chromatic number of the graph). Krivelevich and Nachmias~\cite{KN1, KN2} initiated the study of this topic in 2005 and considered it specifically for complete bipartite graphs and powers of cycles.  Several follow-up papers were written by Casselgren~\cite{C1, C2, C3, C4} and Casselgren and H\"aggkvist \cite{CH} on various families of graphs including complete graphs, complete multipartite graphs, graphs with bounded degree, etc. The concept of colorings from random list assignments---under the name \emph{palette sparsification}---has recently found applications in the design of sublinear algorithms in the work of Assadi, Chen, and Khanna \cite{ACK}.%yes

		As discussed in the paper by Dvo\v{r}\'{a}k and Postle~\cite{DP}, a full DP-cover of $G$ is equivalent to the previously studied notion of a lift (or a covering graph) of $G$ (see Godsil and Royle~\cite[\S6.8]{GR} and discussion with further references in~\cite{AL}).  The notion of random $k$-lifts was introduced by Amit and Linial~\cite{AL}.  This work, and the large body of research following it~\cite{AL2, ALM, LR}, studied random $k$-lifts as a random graph model.  Their purpose was the study of the properties of random $k$-lifts of a fixed graph $G$ as $k\to\infty$.  In this paper, we study the probability that a randomly selected cover has an independent transversal.  This focus on finding independent transversals is different from the previous work on random $k$-lifts.

		%The structure of a random $k$-lift is identical to the structure we use for a random $k$-fold DP-cover.  
        Let us describe our random model formally. Suppose $G$ is a graph with $V(G)=\{v_1,\ldots,v_n\}$ and $E(G)=\{e_1,\ldots,e_m\}$.  For some $k\in\N$, let $L(v)\defeq\{(v,i):i\in[k]\}$ for each $v\in V(G)$.  Let $S_k$ be the set of all permutations of $[k]$.  For each ${\boldsymbol\sigma}=(\sigma_1,\ldots,\sigma_m)\in S_k^m$, let $\mathcal{H}_{\boldsymbol\sigma}=(L,H_{\boldsymbol\sigma})$ be the full $k$-fold cover of $G$, where $H_{\boldsymbol\sigma}$ has the following edges.  For each $j\in[m]$, consider the edge $e_j$.  Suppose $e_j=v_qv_r$ with $q<r$, and define %$E_{H_{\boldsymbol\sigma}}\left(L(v_q),L(v_r)\right)$ to be
        \[
            E_{H_{\boldsymbol\sigma}}\left(L(v_q),L(v_r)\right) \,\defeq\, \{(v_q,i)(v_r,\sigma_j(i)) \,:\,i\in[k]\}.
        \]
        Let $\Omega_{G,k}\defeq\{\mathcal{H}_{\boldsymbol\sigma}:{\boldsymbol\sigma}\in S_k^m\}$. Note that $|\Omega_{G,k}|=(k!)^{m}$. 
		Let $\mathcal{H}(G,k)$ be an element of $\Omega_{G,k}$ chosen uniformly at random. We will refer to $\mathcal{H}(G,k)$ as a \emph{random $k$-fold cover of $G$}.

		Equivalently, a random $k$-fold cover of a graph $G$, $\mathcal{H}(G,k)=(L,H)$, can be constructed in the following way.  The mapping $L$ is as described above.  For each $v\in V(G)$, $L(v)$ is an independent set in $H$.  For each edge $e\in E(G)$, where $e=v_qv_r$, the edge set $E_H\left(L(v_q),L(v_r)\right)$ takes on one of the $k!$ perfect matchings between $L(v_q)$ and $L(v_r)$ uniformly at random. 
  
        Here we study the probability that a random cover of a graph has an independent transversal.  When $\mathbb{P}(\text{$G$ is $\mathcal{H}(G,k)$-colorable}) = p$, we say that $G$ is \emph{$k$-DP-colorable with probability $p$}.  %When $\mathbb{P}(\text{$G$ is $\mathcal{H}(G,k)$-colorable}) \leq p$, we say that $G$ is \emph{non-$k$-DP-colorable with probability $1-p$}.

	\subsection{Outline of the main results} %and Open Questions}

        \subsubsection{DP-colorability and density}

		\noindent %We now give an outline of the results of this paper. 
        Table~\ref{tab:compilation} compiles our main results.  Given $\epsilon > 0$ and a graph $G$ with a large enough number $n$ of vertices whose maximum density is bounded below by the quantity in the first column, the third column gives the probability that $G$ is $k$-DP-colorable provided $k$ satisfies the inequality in the second column. % is bounded below by the sufficient cover size.

        {%\renewcommand{\arraystretch}{1.2}
        \begin{table}[h]
			\centering
			\begin{tabular}{|c|c|>{\centering\arraybackslash}p{2.7cm}|c|}
				\hline
				Density lower bound&Cover size&Probability of $k$-DP-colorability&Reference\\
				\hline
				$\exp(e/\epsilon)$&$\displaystyle k\leq\frac{\rho(G)}{\ln\rho(G)}$&$\leq \epsilon$&Prop.~\ref{prop:first}\\
				&&&\\
				$n^{1-s}$ for $s\in[0,1/3)$&$\displaystyle k\geq(1+\epsilon)\left(1+\frac{s}{1-2s}\right)\frac{\rho(G)}{\ln\rho(G)}$&$\geq 1 - \epsilon$&Thm.~\ref{thm:slidingsharpness}\\
				&&&\\
				$\displaystyle\ln^{2/\epsilon}n$&$\displaystyle k\geq(1+\epsilon)\frac{2\rho(G)}{\ln\rho(G)}$&$\geq 1 - \epsilon$&Thm.~\ref{Degeneracy}\\
                &&&\\
				No lower bound & $k > 2\rho(G)$ &$1$&\cite{DP}\\
				\hline
			\end{tabular}
			\caption{Probability of $k$-DP-colorability depending on the maximum density of $G$. %Let $G$ be a graph with $n$ vertices and maximum density $\rho(G)$ that is at least the value in the first column, and let $k \in \N$ satisfy the bound in the second column. %If the third column's entry is ``yes,'' then $G$ is $k$-DP-colorable with probability at least $1 - \epsilon$; otherwise, $G$ is not $k$-DP-colorable with probability at least $1 - \epsilon$.%, then $G$ is   and $\epsilon>0$, sufficient cover size based on maximum density.  The third column indicates whether $G$ is $k$-DP-colorable with probability $1-\epsilon$: ``yes", or whether $G$ is non-$k$-DP-colorable with probability $1-\epsilon$: ``no".
            }
			\label{tab:compilation}
		\end{table}
        }

        %In Section~\ref{sec:maxDensity}, we bound the probability that a graph is $k$-DP-colorable and that a graph is non-$k$-DP-colorable.  We start with
        %We start by recording the following observation Our first result shows that dense enough graphs are %a result for the probability of a dense enough graph being
        %not $k$-DP-colorable with probability close to $1$:

        Our work shows that, for sufficiently dense graphs $G$, the probability $G$ is $k$-DP-colorable exhibits a transition when $k$ is close to $\rho(G)/\ln\rho(G)$. To begin with, we note that if $k \leq \rho(G)/\ln\rho(G)$ and $\rho(G)$ is not too small, the probability that $G$ is $k$-DP-colorable is close to $0$: %for every graph $G$:

        \begin{pro}\label{prop:first}
            Let $\epsilon>0$ and let $G$ be a nonempty graph with %$\rho(G)\geq \exp(\exp(1-\ln\epsilon))$.
            $\rho(G)\geq \exp(e/\epsilon)$. If $1\leq k\leq\rho(G)/\ln\rho(G)$, then $G$ is $k$-DP-colorable with probability at most $\epsilon$.
            %
			%\color{blue}{Consider a sequence of graphs $\mathcal{G}=(G_\lambda)_{\lambda\in\N}$ with $\rho(G_\lambda)\to\infty$ as $\lambda\to\infty$.  Let $\kappa=(k_\lambda)_{\lambda\in\N}$ be a sequence of natural numbers.  If there exists $\Lambda\in\N$ such that $1\leq k_\lambda\leq\frac{\rho(G_\lambda)}{\ln\rho(G_\lambda)}$ for all $\lambda\geq\Lambda$, then $\mathcal{G}$ is non-$\kappa$-DP-colorable w.h.p..}
            %
		\end{pro}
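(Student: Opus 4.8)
The plan is a first-moment calculation carried out on the densest subgraph of $G$. Write $\rho\defeq\rho(G)$ and fix a subgraph $G'\subseteq G$ with $d(G')=\rho$; put $n'\defeq|V(G')|$ and $m'\defeq|E(G')|=\rho n'$. Since $G'$ is simple, $\rho n'=m'\leq\binom{n'}{2}$, which I will use in the form $n'\geq 2\rho+1$ — this is the one place where maximum \emph{density} (as opposed to, say, average degree) really matters. I would also record that we may assume $0<\epsilon<1$, as the statement is otherwise vacuous, so that $\rho\geq\exp(e/\epsilon)$ gives $\ln\rho\geq e/\epsilon$ and hence $\ln\ln\rho\geq\ln(e/\epsilon)=1+\ln(1/\epsilon)>0$. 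The first genuine step is to pass from $G$ to $G'$: if $\mathcal{H}(G,k)$ has an independent transversal $T$, then $T$ intersected with $\bigcup_{v\in V(G')}L(v)$ is an independent transversal of the cover that $\mathcal{H}(G,k)$ induces on $G'$, and because the matchings on distinct edges are sampled independently in our model, that induced cover is distributed exactly as $\mathcal{H}(G',k)$. Hence $\mathbb{P}(G\text{ is }\mathcal{H}(G,k)\text{-colorable})\leq\mathbb{P}(G'\text{ is }\mathcal{H}(G',k)\text{-colorable})$, and it suffices to bound the latter.

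Next I would compute the expected number $X$ of independent transversals of $\mathcal{H}(G',k)$. A transversal is encoded by a function $f\colon V(G')\to[k]$, and for a fixed $f$ and a fixed edge $e=v_qv_r$ of $G'$ (say $q<r$), the chosen vertices $(v_q,f(v_q))$ and $(v_r,f(v_r))$ are adjacent in $H_{\boldsymbol\sigma}$ precisely when $\sigma_e(f(v_q))=f(v_r)$, an event of probability $1/k$ for $\sigma_e$ uniform in $S_k$. Since the permutations $\sigma_e$ are independent across $e\in E(G')$, the probability that $f$ yields an independent transversal is $(1-1/k)^{m'}$, and summing over the $k^{n'}$ transversals gives $\E[X]=k^{n'}(1-1/k)^{m'}$. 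Using $1-1/k\leq e^{-1/k}$ and $m'=\rho n'$,
\[
\E[X]\;\leq\;\exp\!\big(n'(\ln k-\rho/k)\big).
\]

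To finish, note that $k\mapsto\ln k-\rho/k$ is increasing on $(0,\infty)$, so on $1\leq k\leq\rho/\ln\rho$ it is at most its value at $k=\rho/\ln\rho$, which is $\ln(\rho/\ln\rho)-\ln\rho=-\ln\ln\rho$. Together with $n'\geq 2\rho+1>2\rho$ and $\ln\ln\rho>0$ this gives $\E[X]\leq\exp(-2\rho\ln\ln\rho)$; then $\rho\geq\exp(e/\epsilon)$ forces $\ln\ln\rho\geq 1+\ln(1/\epsilon)\geq\ln(1/\epsilon)$ and $2\rho\geq 1$, so $2\rho\ln\ln\rho\geq\ln(1/\epsilon)$ and hence $\E[X]\leq\epsilon$. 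By Markov's inequality $\mathbb{P}(G'\text{ is }\mathcal{H}(G',k)\text{-colorable})=\mathbb{P}(X\geq 1)\leq\E[X]\leq\epsilon$, which with the reduction above completes the proof. I do not expect a real obstacle: this is a short first-moment estimate, and the only things to get right are to localize to the densest subgraph \emph{before} taking the expectation (so that the edge count is at least $\rho$ times the vertex count) and to keep enough control of constants to see that the threshold $\exp(e/\epsilon)$ — chosen exactly so that $\ln\ln\rho$ dominates $\ln(1/\epsilon)$ with room to spare — is what the argument needs.
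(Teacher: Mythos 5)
Your argument is correct and follows essentially the same route as the paper's: reduce to the densest subgraph, compute the expected number of independent transversals, bound it below $\epsilon$, and finish with Markov's inequality. The paper actually proves the more general fractional statement (Proposition~\ref{pro:fractionalfirst}) and derives Proposition~\ref{prop:first} as a special case, and it settles for the weaker (but sufficient) use of $n' \geq 1$ rather than your $n' \geq 2\rho+1$; neither difference is substantive.
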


        Proposition~\ref{prop:first} is proved via a simple application of the First Moment Method. It is a straightforward modification of \cite[Theorem 1.6]{B}, but, for completeness, we include a self-contained proof in \S\ref{subsec:FM}. We remark that the bound $\rho(G) \geq \exp(e/\epsilon)$ in Proposition~\ref{prop:first} is rather crude and can be sharpened by more careful calculations. However, this bound is sufficient for our purposes. In particular, note that if $(G_\lambda)_{\lambda \in \N}$ is a sequence of graphs such that $\rho(G_\lambda) \to \infty$ as $\lambda \to \infty$, then the bound $\rho(G_\lambda) \geq \exp(e/\epsilon)$ holds for any fixed $\epsilon > 0$ and all large enough $\lambda$.  %\todo{AB: Is this right?}%How about: In particular, note that if $(G_\lambda)_{\lambda \in \N}$ is a sequence of graphs such that $\rho(G_\lambda) \to \infty$ as $\lambda \to \infty$, then for any fixed $\epsilon>0$, the bound $\rho(G_\lambda) \geq \exp(e/\epsilon)$ holds for all large enough $\lambda$.  
        
        %Proposition~\ref{prop:first} is a consequence of a more general result, Lemma~\ref{lem:fractionalfirst}, which is proved in \S\ref{sec:nonKCol} by applying Markov's inequality. \todo{AB: Maybe we should mention that this is essentially present in \url{https://arxiv.org/abs/1602.00347}?} %We follow this with a result for the probability of dense enough graphs with high enough order being $k$-DP-colorable.
        
        Next we show that if $G$ is fairly dense (of density greater than $n^{2/3}$) and $k$ exceeds $\rho(G)/\ln \rho(G)$ by an appropriate constant factor, then $G$ is $k$-DP-colorable with probability close to $1$: %Yes

        \begin{thm}\label{thm:slidingsharpness}
            For all $\epsilon > 0$ and $s \in [0, 1/3)$, there is $n_0 \in \N$ such that the following holds. Suppose $G$ is a graph with $n \geq n_0$ vertices such that $\rho(G)\geq n^{1-s}$, and
            \begin{equation}\label{eq:2nd}
                k \,\geq\, \left(1+\epsilon\right)\left(1+\frac{s}{1-2s}\right)\frac{\rho(G)}{\ln \rho(G)}.
            \end{equation}
            Then $G$ is $k$-DP-colorable with probability at least $1-\epsilon$.
            %
			%\color{blue}{Consider a sequence of graphs $\mathcal{G}=(G_\lambda)_{\lambda\in\N}$ with $\rho(G_\lambda)\to\infty$ as $\lambda\to\infty$.  Let $\kappa=(k_\lambda)_{\lambda\in\N}$ be a sequence of natural numbers.  Suppose $f(\lambda)$ is a nonnegative function satisfying $\limsup_{\lambda\to\infty}f(\lambda)\leq s$ for some $s\in[0,\frac{1}{3})$.  If there exist $\epsilon>0$ and $\Lambda\in\N$ such that $\rho(G_\lambda)\geq\max\left\{\left| V\left(G_\lambda\right)\right|^{1-f(\lambda)},e\right\}$ and $k_\lambda\geq\left( \left(1+\epsilon\right)\left(1+\frac{s}{1-2s}\right)\cdot\frac{\rho(G_\lambda)}{\ln \rho(G_\lambda)}\right)$ for all $\lambda\geq\Lambda$, then $\mathcal{G}$ is $\kappa$-DP-colorable w.h.p..}
            %
		\end{thm}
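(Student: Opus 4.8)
\medskip
\noindent The plan is to apply the second moment method to the random variable $X = X(\mathcal{H})$ equal to the number of proper $\mathcal{H}$-colorings of $G$ (equivalently, the number of independent transversals of $\mathcal{H}$), where $\mathcal{H} = \mathcal{H}(G,k)$. First I would compute $\mathbb{E}[X]$. Identifying a transversal with a choice of a color $c(v)\in[k]$ for every $v\in V(G)$, the matchings along the edges of $G$ are mutually independent, and the matching along an edge $uv$ fails to join $(u,c(u))$ to $(v,c(v))$ with probability $1-1/k$; hence the transversal is independent with probability $(1-1/k)^{m}$, where $m=|E(G)|$, and therefore $\mathbb{E}[X] = k^{n}(1-1/k)^{m}$. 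Since $m\leq\rho(G)\,n$ and \eqref{eq:2nd} forces in particular $k\geq(1+\epsilon)\rho(G)/\ln\rho(G)$, a short computation gives $\mathbb{E}[X]\to\infty$; the substance of the proof, however, lies in the second moment.

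For $\mathbb{E}[X^{2}]$ I would group ordered pairs of transversals $(T,T')$ by their \emph{agreement set} $S=\{v: c(v)=c'(v)\}$ and analyze an edge according to whether it lies inside $S$, crosses between $S$ and $\bar S$, or lies inside $\bar S$. Writing $e(A)$ and $e(A,B)$ for the numbers of edges inside $A$ and between $A$ and $B$, this gives, conditioned on $S$,
\[
  \mathbb{P}\bigl(T\text{ and }T'\text{ are both independent}\bigr)=\left(1-\tfrac1k\right)^{e(S)}\left(1-\tfrac2k\right)^{e(S,\bar S)}\left(1-\tfrac2k+\tfrac1{k(k-1)}\right)^{e(\bar S)},
\]
and counting the pairs with a prescribed $S$ yields
\[
  \frac{\mathbb{E}[X^{2}]}{\mathbb{E}[X]^{2}}=\sum_{S\subseteq V(G)}\left(1-\tfrac1k\right)^{n-|S|}k^{-|S|}\,\mathfrak{a}^{\,e(S)}\,\mathfrak{b}^{\,e(S,\bar S)}\,\mathfrak{c}^{\,e(\bar S)},
\]
with $\mathfrak{a}=\tfrac{k}{k-1}$, $\mathfrak{b}=1-\tfrac1{(k-1)^{2}}\leq1$, and $\mathfrak{c}=1+\tfrac1{(k-1)^{3}}$. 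I would then discard $\mathfrak{b}^{\,e(S,\bar S)}\leq1$, use $\mathfrak{c}^{\,e(\bar S)}\leq e^{m/(k-1)^{3}}$, and apply the identity
\[
  \sum_{S\subseteq V(G)}\left(1-\tfrac1k\right)^{n-|S|}k^{-|S|}\,\mathfrak{a}^{\,e(S)}=\sum_{F\subseteq E(G)}\frac{1}{(k-1)^{|F|}\,k^{|V(F)|}},
\]
which follows by expanding $\mathfrak{a}^{\,e(S)}=\sum_{F\subseteq E(G[S])}(k-1)^{-|F|}$, interchanging the two sums, and collapsing the inner sum over the sets $S$ containing $V(F)$ to $k^{-|V(F)|}$. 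Decomposing each $F$ into connected components bounds the right-hand side by $\exp(\Lambda)$, where $\Lambda=\sum_{C}\frac{1}{(k-1)^{|E(C)|}k^{|V(C)|}}$ ranges over all nonempty connected subgraphs $C$ of $G$. Hence it suffices to prove $\Lambda+m/(k-1)^{3}=o(1)$, after which the Paley--Zygmund (second moment) inequality gives $\mathbb{P}(X\geq1)\geq\mathbb{E}[X]^{2}/\mathbb{E}[X^{2}]\geq1-\epsilon$ for all $n\geq n_{0}$.

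The hard part — and the main obstacle — is the estimate $\Lambda=o(1)$. The contribution to $\Lambda$ of single edges equals $m/((k-1)k^{2})\leq\rho(G)\,n/((k-1)k^{2})$, which together with $m/(k-1)^{3}$ is $o(1)$ already once $\rho(G)\geq n^{1-s}$ with $s<1/2$. The delicate contribution is that of connected subgraphs supported on comparatively few vertices but carrying many edges (the locally densest parts of $G$): here one weighs the count ``(number of connected $v$-vertex subgraphs of $G$) $\leq\binom{n}{v}2^{\rho(G)v}$'' against the factor $(k-1)^{-|E(C)|}$, organizing the sum according to the numbers of vertices and of edges of $C$. Optimizing this trade-off, the borderline configurations — already visible for complete bipartite graphs $K_{a,b}$ with $b$ large and $a\approx b^{1-s}$, for which the decisive connected subgraphs use all of the small side together with an unbounded portion of the large side — are precisely the ones forcing $k$ to exceed $\frac{1-s}{1-2s}\cdot\frac{\rho(G)}{\ln\rho(G)}=\bigl(1+\tfrac{s}{1-2s}\bigr)\frac{\rho(G)}{\ln\rho(G)}$, which is exactly what \eqref{eq:2nd} provides, the spare factor $1+\epsilon$ absorbing logarithmic corrections. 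I expect that carrying out this last estimate uniformly over all graphs with $\rho(G)\geq n^{1-s}$, and in particular handling all ``shapes'' of the connected subgraphs $C$, will be the technically heaviest step.
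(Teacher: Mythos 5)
Your setup through the second-moment computation is exactly the paper's: you compute $\mathbb{E}[X]=k^n(1-1/k)^m$, correctly evaluate the joint probability of an edge being ``good'' for both transversals in the three cases (edge inside the agreement set $S$, crossing $S/\bar S$, inside $\bar S$), and correctly express $\mathbb{E}[X^2]/\mathbb{E}[X]^2$ as a sum over $S$, after which both you and the paper discard the crossing-edge factor $\mathfrak{b}\le 1$ and bound $\mathfrak{c}^{e(\bar S)}$ uniformly. From that point on the two arguments genuinely diverge.

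The paper does not pass through the edge-subset identity or a cluster sum. It instead observes that, because the maximum density of $G$ is $\rho$, the number of edges inside the agreement set satisfies $e(S)\le\mu(\nu)\coloneqq\min\bigl\{\tbinom{\nu}{2},\rho\nu\bigr\}$ where $\nu=|S|$. Substituting this and grouping by $\nu$ turns the variance bound into a single one-variable sum $\sum_{\nu=0}^n\binom{n}{\nu}k^{-\nu}(1-1/k)^{n-\nu}\bigl(1+\tfrac1{k-1}\bigr)^{\mu(\nu)}$, which is then estimated by splitting into the ranges $\nu\le n^{s+\delta}$, $n^{s+\delta}<\nu\le k-1$, and $\nu\ge k$ with $\delta=(1-3s)/4$; the constant $1+\tfrac{s}{1-2s}$ arises exactly in the third range, where $\mu(\nu)\le\rho\nu$ is used. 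The density hypothesis is invoked exactly once, through $\mu$, and no enumeration of connected subgraphs by shape ever appears. This is a much lighter computation than your proposed route.

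Your route --- expanding $\mathfrak{a}^{e(S)}$, rewriting as $\sum_F(k-1)^{-|F|}k^{-|V(F)|}$, decomposing $F$ into components, and reducing to $\Lambda=o(1)$ --- is algebraically sound (the identity and the bound $\sum_F\le e^{\Lambda}$ are correct, with the important caveat that a component of an edge set $F$ has at least one edge, so $\Lambda$ ranges only over connected subgraphs with at least two vertices; if isolated vertices were included the sum $n/k$ alone would already be $\gg 1$ for $s>0$). However, you explicitly stop short of proving $\Lambda=o(1)$ and flag it as ``the technically heaviest step.'' That step is the entire content of the theorem: the first moment and the Chebyshev reduction are routine, and the assumption $\rho(G)\ge n^{1-s}$ together with the precise constant $1+\tfrac{s}{1-2s}$ is used nowhere else in your outline. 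As written this is a genuine gap. Your heuristic about borderline configurations in $K_{a,b}$ is a reasonable guide to where the bottleneck lies, but a uniform estimate over all $(v,e)$-shaped connected subgraphs of an arbitrary graph with $\rho(G)\ge n^{1-s}$ would still need to be supplied, and it is likely to be at least as hard as the theorem itself. I would recommend replacing the connected-subgraph expansion with the paper's simpler step: after writing the sum over agreement sets, bound $e(S)\le\min\{\tbinom{\nu}{2},\rho\nu\}$ and proceed with the three-range split in $\nu$.
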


        Theorem~\ref{thm:slidingsharpness} is proved in \S\ref{sec:kCol} by applying the Second Moment Method to the number of independent transversals in a random cover of $G$.  Notice that when the density of $G$ is at least $n^{1-o(1)}$, the factor in front of $\rho(G)/\ln \rho(G)$ in \eqref{eq:2nd} is very close to $1$ (which matches the bound in Proposition~\ref{prop:first}). %Proposition~\ref{prop:first} and Theorem~\ref{thm:slidingsharpness} loosely capture an older extremal theorem of Bernshteyn~\cite{B} from 2016: there exist positive constants $c$ and $C$ such that for any $d$-regular triangle-free graph $G$, we have $cd/ \ln d \leq \chi_{_{DP}}(G) \leq Cd/ \ln d$. Note that $d$-regular graphs are balanced graphs with density $d/2$.
        On the other hand, when $\rho(G) \geq n^{2/3 + o(1)}$, the factor in front of $\rho(G)/\ln\rho(G)$ approaches $2$. Our next result shows that with the factor fixed at $2$, the conclusion of Theorem~\ref{thm:slidingsharpness} remains true all the way down to $\rho(G) \geq \ln^{\omega(1)} n$. More precisely, we establish the following lower bound on the probability that $G$ is $k$-DP-colorable in terms of the relationship between $k$ and the degeneracy of $G$: %Theorem~\ref{thm:slidingsharpness} only applies to relatively dense graphs.  We are able to loosen this restriction by looking at the degeneracy of the graph instead of its maximum density. Note that a graph $G$ of degeneracy $d$ satisfies $d/2\leq\rho(G)\leq d$.

        \begin{thm}\label{Degeneracy}
            For all $\epsilon \in (0,1/2)$, there is $n_0\in\N$ such that the following holds.  Let $G$ be a graph with $n\geq n_0$ vertices and degeneracy $d$ such that $d\geq \ln^{2/\epsilon}n$ and let $k\geq(1+\epsilon)d / \ln d$.  Then $G$ is $k$-DP-colorable with probability at least $1-\epsilon$.
            %
			%\color{blue}{[Consider $\mathcal{G}=(G_\lambda)_{\lambda\in\N}$, a sequence of graphs with the degeneracy of $G_\lambda$ being $d_\lambda$, where $d_\lambda\to\infty$ as $\lambda\to\infty$. If there exists some $\epsilon>0$ and $\Lambda\in\N$ such that $d_\lambda\geq \ln^{2+\frac{2}{\epsilon}}|V(G_\lambda)|$ and $k_\lambda\geq(1+\epsilon)\frac{d_\lambda}{\ln d_\lambda}$ for all $\lambda\geq\Lambda$, and $\kappa=(k_\lambda)_{\lambda\in\N}$, then $\mathcal{G}$ is $\kappa$-DP-colorable w.h.p..]}
		\end{thm}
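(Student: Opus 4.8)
The plan is to fix a degeneracy ordering of $V(G)$, color its vertices greedily, and reveal the random matchings of $\mathcal{H}(G,k)$ one vertex at a time using the principle of deferred decisions; the aim is to show the greedy procedure succeeds with probability at least $1-\epsilon$. Let $v_1,\dots,v_n$ be an ordering witnessing that $G$ is $d$-degenerate, so that each $v_i$ has a set $B_i\subseteq\{v_1,\dots,v_{i-1}\}$ of back-neighbors with $|B_i|\leq d$. We build a transversal $T=\{(v_i,c_i):i\in[n]\}$ of $\mathcal{H}(G,k)$ inductively: having chosen $c_1,\dots,c_{i-1}$, inspect for each $v_j\in B_i$ the perfect matching $M_j$ between $L(v_j)$ and $L(v_i)$ in $H$ (the edge $v_jv_i$ is inspected exactly at this moment, when its larger endpoint is processed), let $F_i\subseteq[k]$ be the set of colors $c$ for which $(v_i,c)$ is the $M_j$-partner of $(v_j,c_j)$ for some $v_j\in B_i$, and set $c_i\defeq\min([k]\setminus F_i)$ if $F_i\neq[k]$, declaring failure otherwise. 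If failure never occurs then $T$ is an independent transversal, since for every edge $v_jv_i$ with $j<i$ we have $v_j\in B_i$ and $c_i\notin F_i$, so $(v_j,c_j)$ and $(v_i,c_i)$ are not matched and hence nonadjacent in $H$. Thus it suffices to bound the probability of failure.

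Fix $i$ and condition on everything revealed before $v_i$ is processed, i.e.\ on $c_1,\dots,c_{i-1}$ and all previously inspected matchings. Those data depend only on matchings of edges with both endpoints in $\{v_1,\dots,v_{i-1}\}$, so they are independent of $\{M_j:v_j\in B_i\}$; hence conditionally these matchings are independent and uniform, and the colors $\{(M_j\text{-partner of }c_j):v_j\in B_i\}$ are i.i.d.\ uniform on $[k]$. Consequently $F_i$ has the law of the set of occupied bins when $|B_i|\leq d$ balls are thrown independently and uniformly into $k$ bins. Occupancy counts are negatively associated, and the events ``bin $c$ is occupied'' are increasing functions of them, so
\[
\mathbb{P}\bigl(F_i=[k]\mid\text{past}\bigr)\;\leq\;\prod_{c=1}^{k}\mathbb{P}(\text{bin }c\text{ occupied})\;=\;\bigl(1-(1-1/k)^{|B_i|}\bigr)^{k}\;\leq\;\bigl(1-(1-1/k)^{d}\bigr)^{k}\;\leq\;\exp\!\bigl(-k(1-1/k)^{d}\bigr).
\]

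It remains to check that $k(1-1/k)^{d}$ beats $\ln(n/\epsilon)$ once $n$ is large. Using $(1-1/k)^{d}\geq\exp(-d/(k-1))$ together with $k\geq(1+\epsilon)d/\ln d$, one gets $d/(k-1)\leq(1+o(1))\,\tfrac{\ln d}{1+\epsilon}$ as $d\to\infty$, hence $k(1-1/k)^{d}\geq\tfrac{1+\epsilon}{\ln d}\,d^{(\epsilon-o(1))/(1+\epsilon)}$. Since $\epsilon<1$ we have $\epsilon/(1+\epsilon)>\epsilon/2$, so for all large $d$ the exponent $(\epsilon-o(1))/(1+\epsilon)$ exceeds $\epsilon/2$; combined with $\ln n\leq d^{\epsilon/2}$ (which is exactly the hypothesis $d\geq\ln^{2/\epsilon}n$) this yields $k(1-1/k)^{d}\geq\ln(n/\epsilon)$ for all sufficiently large $n$, noting that $d\geq\ln^{2/\epsilon}n_0\to\infty$ as $n_0\to\infty$. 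Therefore $\mathbb{P}(F_i=[k])\leq\epsilon/n$ for every $i$, and a union bound over $i\in[n]$ shows the greedy procedure fails with probability at most $\epsilon$, so $G$ is $k$-DP-colorable with probability at least $1-\epsilon$.

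The crux is obtaining a sufficiently strong tail bound for the event $F_i=[k]$. A first- or second-moment estimate on the number of colors missing from $F_i$ would only give $\mathbb{P}(F_i=[k])=O\bigl(1/(k(1-1/k)^{d})\bigr)$, which is polynomial in $d$ and far too weak against a union bound over $n$ as large as roughly $\exp(d^{\epsilon/2})$; the stretched-exponential bound $\exp(-k(1-1/k)^{d})$, delivered by the negative-association inequality for occupancy numbers, is what makes the argument go through, and the exponent $\epsilon/(1+\epsilon)$ of $d$ being strictly larger than $\epsilon/2$ is precisely what leaves room for the hypothesis $d\geq\ln^{2/\epsilon}n$.
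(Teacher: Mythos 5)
Your proof is correct and follows the same overall scheme as the paper's: a greedy transversal built along a degeneracy ordering, with each edge's matching revealed only when its later endpoint is processed, reducing the failure event at step $i$ to a balls-in-bins question, and closing with a union bound. The one genuine difference is how the per-vertex failure probability $\mathbb{P}(F_i=[k])$ is controlled. The paper isolates a ``negative correlation'' statement (its Lemma 4.3(ii), proved by a self-contained coupling) for the indicators $Y_{i,j}$ of ``color $j$ is forbidden at $v_i$,'' and then feeds this into a Chernoff--Hoeffding inequality for negatively correlated variables (Panconesi--Srinivasan / Molloy) to get $\mathbb{P}(X_i<\E(X_i)/2)<\exp(-\E(X_i)/8)$. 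You instead apply the product inequality directly with $I=[k]$: since the occupancy indicators are negatively associated, $\mathbb{P}(F_i=[k])\leq\prod_c\mathbb{P}(\text{bin $c$ occupied})\leq\exp(-k(1-1/k)^d)$. That is cleaner for this theorem and even gives a marginally better constant (no factor $1/8$ in the exponent), but note two trade-offs: first, you quote negative association of occupancy counts as a black box, where the paper gives an elementary coupling proof of exactly the product inequality it needs; second, the paper's detour through the Chernoff bound is not wasted effort, since the same machinery is reused verbatim to bound $\mathbb{P}(X_i<b)$ for $b>1$ in the fractional version (Theorem \ref{thm:fractionalDegeneracy}), which your direct product bound with $I=[k]$ does not reach. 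Your numerics are sound: $d/(k-1)\leq(1+o(1))\ln d/(1+\epsilon)$ gives $k(1-1/k)^d\geq\frac{1+\epsilon}{\ln d}\,d^{(\epsilon-o(1))/(1+\epsilon)}$, and since $\epsilon<1$ forces $\epsilon/(1+\epsilon)>\epsilon/2$, this beats $\ln n\leq d^{\epsilon/2}$ once $d$ (equivalently $n_0$) is large, which matches the paper's estimate $\E(X_i)>d^{2\epsilon/3}$ up to the choice of convenient exponent.
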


        Theorem~\ref{Degeneracy} is proved in \S\ref{sec:degeneracy} by analyzing a greedy algorithm for constructing an independent transversal in a random $k$-fold cover. The key tool we employ is a form of the Chernoff--Hoeffding bound for negatively correlated Bernoulli random variables due to Panconesi and Srinivasan~\cite{PS}. %We show that we can, with probability at least $1-\epsilon$, greedily construct an independent transversal.
        Since every graph $G$ is $2\rho(G)$-degenerate, the lower bound on $k$ in Theorem~\ref{Degeneracy} is implied by $k\geq(1+\epsilon)2\rho(G)/\ln \rho(G)$, which is roughly a factor of $2$ away from the bound in Proposition~\ref{prop:first}. We conjecture that the factor of $2$ is not needed: %, assuming $\rho(G) \geq \ln^{\omega(1)} n$: %the factor in front of (\ref{eq:2nd}) in Theorem \ref{thm:slidingsharpness} can be reduced to $(1+\epsilon)$.  We also conjecture that this sharper bound can be obtained for a much broader class of graphs (not just graphs satisfying $\ln\rho(G)>2/3\ln |V(G)|$).
        \begin{conj}\label{conj:one_is_right}
            For all $\epsilon > 0$, there exist %and $f(n)=\omega(1)$, there is
            $C>0$ and $n_0 \in \N$ such that the following holds. Suppose $G$ is a graph with $n \geq n_0$ vertices such that $\rho(G)\geq \ln^{C}n$, and % $\rho(G)\geq \ln^{f(n)}n$, and
            \begin{equation*}
                k \,\geq\, \left(1+\epsilon\right)\frac{\rho(G)}{\ln \rho(G)}.
            \end{equation*}
            Then $G$ is $k$-DP-colorable with probability at least $1-\epsilon$.
        \end{conj}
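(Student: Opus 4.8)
Towards Conjecture~\ref{conj:one_is_right}, let us sketch a plausible approach. Unlike the greedy argument behind Theorem~\ref{Degeneracy}, the conjectured bound cannot be reached by processing $G$ in a fixed order: already for $G=K_{t,t}$, where $\rho(G)=t/2$, the naive strategy of transversally coloring one side and then the other breaks down once $k$ drops below roughly $t/\ln t$. Indeed, after one side has been colored, each vertex $b$ on the other side sees $t$ essentially independent uniform random forbidden colors, so it retains only about $k\exp(-t/k)$ colors, and this tends to $0$ when $k=(1+\epsilon)\rho(G)/\ln\rho(G)$ with $\epsilon<1$ (there $t/k\approx 2\ln t/(1+\epsilon)$). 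More generally, scanning $G$ in a degeneracy order forces some vertices to have back-degree close to $2\rho(G)$, which is precisely the origin of the factor of $2$ in Theorem~\ref{Degeneracy}. The plan is therefore to color $G$ \emph{simultaneously} via the semi-random method---the R\"odl nibble, or ``wasteful coloring procedure'' of Molloy and Reed, in the form developed by Johansson and refined by Molloy for locally sparse graphs.

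Concretely, one would run the following process on the random $k$-fold cover $\mathcal H=(L,H)$. Maintaining a partial independent transversal, in each round every still-uncolored vertex $v$ is activated with a suitable probability, picks a color uniformly at random from its current surviving list, and keeps that color exactly when no activated neighbor's chosen vertex is adjacent to it in $H$; kept colors are frozen, and then every uncolored vertex deletes from its list each color that now conflicts with a frozen neighbor. The crucial point is to expose the matchings of $\mathcal H$ \emph{lazily}: the matching on $E_H(L(u),L(v))$ is revealed only when a conflict between $u$ and $v$ first has to be tested, so at that instant it is a uniform perfect matching independent of the entire history. Consequently, distinct neighbors of a vertex $v$ delete colors from $L(v)$ through mutually independent random matchings---which supplies exactly the ``local sparsity'' input that drives the Johansson--Molloy analysis when $G$ is triangle-free. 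One would then track, for each vertex, its surviving list size together with an effective uncolored degree (equivalently, in Molloy's entropy formulation, a potential counting extensions of the current partial coloring), show that the list size stays proportionally ahead of the effective degree as both shrink, and stop once every uncolored vertex has more surviving colors than uncolored neighbors---whereupon the partial transversal extends greedily to a full independent transversal.

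The main obstacle is the passage from maximum \emph{degree} to maximum \emph{density}. The nibble analysis naturally yields $k\gtrsim(1+\epsilon)\Delta(G)/\ln\Delta(G)$, where $\Delta(G)$ is the maximum degree of $G$; but $\Delta(G)$ can be as large as $\Theta(n)$ while $\rho(G)$ is only $\mathrm{polylog}(n)$, so the argument must use the full force of the hypothesis---namely $|E(G')|\le\rho(G)\,|V(G')|$ for \emph{every} subgraph $G'$ of $G$, not just a bound on degrees. Two natural routes suggest themselves: (i) first peel off the dense substructures of $G$ and color them using the second-moment estimate behind Theorem~\ref{thm:slidingsharpness} applied locally, leaving a residual graph on which degree and density are comparable; or (ii) run the nibble with a refined per-vertex potential that weighs the \emph{density} of a vertex's uncolored neighborhood rather than merely its size, so that an atypically high-degree vertex with a typical-density neighborhood is not over-penalized. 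Making either of these dovetail with the list-size bookkeeping is where we expect the real difficulty to lie.

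Throughout, the tracked quantities must be shown to concentrate, via martingale or Talagrand-type bounded-differences inequalities applied to the combined randomness of the coloring process and the lazily-exposed cover; since $k$---and hence every surviving list---can be as small as $\ln^{C}n$, these tail bounds are essentially tight, and the hypothesis $\rho(G)\ge\ln^{C}n$ with $C=C(\epsilon)$ large is exactly what provides enough slack to survive a union bound over the $n$ vertices and the $O(\ln n)$ rounds. For the very dense regime (say $\rho(G)\ge n^{\delta}$ for a fixed $\delta>0$) an alternative to the nibble is to boost the second-moment computation of Theorem~\ref{thm:slidingsharpness} by small-subgraph conditioning (Robinson--Wormald), using that short cycles in a random lift are asymptotically Poisson with parameters governed by the closed walks of $G$; the obstacle there is to compute the joint moments of the number of independent transversals with the short-cycle counts for a general base graph $G$ and to verify the convergence conditions of that method.
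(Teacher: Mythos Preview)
The paper does not prove this statement: it is posed as Conjecture~\ref{conj:one_is_right} and left open, so there is no proof in the paper to compare your proposal against. Your write-up is, appropriately, framed as a sketch of a plausible attack rather than a proof, and it correctly isolates the central obstruction---the semi-random (Johansson--Molloy) method, even with the lazy-exposure trick that exploits the mutual independence of the matchings in a random cover, naturally yields a bound in terms of the maximum degree $\Delta(G)$ rather than the maximum density $\rho(G)$, and these can differ by a factor polynomial in $n$.

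Neither of the two bridges you suggest across this gap is close to complete. For route (i), the second-moment computation behind Theorem~\ref{thm:slidingsharpness} already breaks down once the density drops below roughly $n^{2/3}$ (the variance term corresponding to overlaps of size $\nu \geq k$ is only controlled when $\rho$ is a positive power of $n$), so it cannot simply be applied ``locally'' to dense pieces of a graph whose global density is polylogarithmic---the pieces would be too small relative to their own density for the estimate to go through, and one would additionally need to stitch the partial transversals together across the decomposition. For route (ii), no density-weighted nibble potential of the kind you describe appears in the literature, and it is not clear what invariant would replace the list-size-to-degree ratio when degrees vary by orders of magnitude across $G$; the hypothesis $|E(G')|\leq\rho(G)\,|V(G')|$ for all $G'\subseteq G$ is a global constraint that does not obviously localize to a single vertex's neighborhood. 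Your alternative for the dense regime via small-subgraph conditioning is likewise programmatic: the method requires precise asymptotics for the joint moments of the independent-transversal count with short-cycle counts in the random lift, and these have not been computed for a general base graph $G$. In short, your proposal is a reasonable survey of candidate techniques and honestly flags its own gaps, but it remains a research program rather than a proof---which is consistent with the conjecture's open status in the paper.
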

        
        %Therefore, if $k$ exceeds the bound $\rho(G)/\ln\rho(G)$ from Proposition~\ref{prop:first} roughly by a factor of $2$, then $G$ is $k$-DP-colorable with probability close to $1$, provided that $\rho(G)$ satisfies a polylogarithmic lower bound in terms of the number of vertices.
        
        %Currently, we only know the conclusion of Conjecture~\ref{conj:one_is_right} when $\rho(G) \geq n^{1-o(1)}$. On the other hand,
        The polylogarithmic lower bound on $\rho(G)$ in Theorem~\ref{Degeneracy} and Conjecture~\ref{conj:one_is_right} is unavoidable, as the following proposition, proved in \S\ref{sec:sparse}, demonstrates:

         % This leads to the question: what results can we obtain for graphs with at most polylogarithmic maximum densities, as a function of the number of vertices.

        %In Section~\ref{sec:sparseExamples} we show that for any $\epsilon>0$ there are examples of sparse graphs that are $k$-DP-colorable with probability $1-\epsilon$ when $k\geq\rho(G)/\ln\rho(G)$; however we will also show that for any $\epsilon>0$ there are examples of sparse graphs that are non-$k$-DP-colorable with probability $1-\epsilon$ for some $k\geq\rho(G)/\ln\rho(G)$.

        \begin{pro}\label{pro:sparse}
			%\begin{enumerate}[(i)]
			%	\item Let $f:\N\to\R^+$ be an arbitrary function satisfying $\lim_{n\to\infty}f(n)=\infty$.  For any $\epsilon>0$ there is a graph $G$ such that $\rho(G)\leq f(|V(G)|)$ and, for any $k\geq(1+\epsilon)\frac{\rho(G)}{\ln \rho(G)}$, $G$ is $k$-DP-colorable with probability $1-\epsilon$
            %    {\color{blue}[Let $f:\N\to\R^+$ be an arbitrary function satisfying $\lim_{n\to\infty}f(n)=\infty$.  There exists a sequence of graphs $\mathcal{G}=(G_\lambda)_{\lambda\in\N}$ satisfying $\rho(G_\lambda)\leq f(|V(G_\lambda)|)$ for all $\lambda\in\N$ that has a sharp DP-threshold function of $\frac{\rho(G_\lambda)}{\ln\rho(G_\lambda)}$.]}
			%	\item
            For any $\epsilon > 0$ and $n_0\in\N$, there is a graph $G$ with $n \geq n_0$ vertices such that $\rho(G) \geq \left( \ln n / \ln\ln n\right)^{1/3}$ but, for every $k\leq 2\rho(G)$, $G$ is $k$-DP-colorable with probability less than $\epsilon$.
                
           %     \color{blue}{[There exists a sequence of graphs $\mathcal{G}=(G_\lambda)_{\lambda\in\N}$ with $\rho(G_\lambda)\to\infty$ as $\lambda\to\infty$, $\rho(G_\lambda)=\Omega(\left(\ln|V(G_\lambda)|/\ln\ln|V(G_\lambda)|\right)^{1/3})$, that is not $\kappa$-DP-colorable w.h.p. where $k_\lambda=\chi_{DP}(G_\lambda)-1$.]}
			%\end{enumerate}
		\end{pro}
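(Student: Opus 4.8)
The plan is to take $G$ to be the disjoint union of a large number $t$ of copies of a single ``gadget'' $F$ chosen so that $\rho(F)$ equals a target value $r$ while $\chi_{DP}(F)$ is as large as possible. The natural candidate is the complete graph $F=K_{2r+1}$: every $K_j$ with $j\leq 2r+1$ has density $(j-1)/2\leq r$, so $\rho(K_{2r+1})=r$; it is $2r$-degenerate; and $\chi_{DP}(K_{2r+1})=2r+1$ because the list assignment giving each vertex the $2r$-set $[2r]$ has no proper colouring (as $\chi(K_{2r+1})=2r+1$). Hence for \emph{every} $k\leq 2r$ there is a $k$-fold cover of $F$ with no independent transversal. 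Since in a disjoint union a graph is $\mathcal H$-colourable iff each component is, $\Pr[G\text{ is }k\text{-DP-colourable}]=\Pr[F\text{ is }k\text{-DP-colourable}]^{t}$.

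Next I would lower bound, for each fixed $k\leq 2r$, the probability $p_k$ that a random $k$-fold cover of $F=K_{2r+1}$ has no independent transversal. Up to relabelling the list of each vertex, a cover ``of list type'' (isomorphic to the cover induced by $L(v)\equiv[k]$) is determined by choosing an arbitrary permutation on each of the $2r$ edges incident to a fixed vertex, producing $(k!)^{2r}$ bad covers out of the $(k!)^{\binom{2r+1}{2}}$ total; thus $p_k\geq (k!)^{-\binom{2r}{2}}$, with the minimum over $k\leq 2r$ attained at $k=2r$. (For the small values $k\leq r/\ln r$ one could instead cite Proposition~\ref{prop:first}, but this is not needed.) Then $\Pr[G\text{ is }k\text{-DP-colourable}]\leq(1-p_k)^{t}\leq e^{-p_k t}$, so choosing $t=\lceil\ln(1/\epsilon)\,((2r)!)^{\binom{2r}{2}}\rceil$ --- or larger, which together with enlarging $r$ always lets one force $n=t(2r+1)\geq n_0$ --- makes $G$ fail to be $k$-DP-colourable with probability exceeding $1-\epsilon$ for all $k\leq 2r=2\rho(G)$ simultaneously.

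Finally one must verify the density bound. With $\rho(G)=r$ and $n=t(2r+1)$, the choice of $t$ gives $\ln n=\Theta\big(\binom{2r}{2}\ln((2r)!)\big)=\Theta(r^{3}\ln r)$, hence $\ln\ln n=\Theta(\ln r)$ and $(\ln n/\ln\ln n)^{1/3}=\Theta(r)$. The main obstacle is pinning down the constant: one needs it to be at most $1$, i.e.\ $\ln n\leq r^{3}\ln\ln n\,(1+o(1))$, and this is delicate because $\binom{2r}{2}\ln((2r)!)\approx 4\,r^{3}\ln r$ whereas the target is $\approx 3\,r^{3}\ln r$. So the crude ``one bad cover'', or even the ``list-type'', count of bad covers does not quite suffice; closing this gap requires a sharper lower bound on the number of $k$-fold covers of the gadget with no independent transversal (equivalently, a better upper bound on the probability that a random cover does have one), and this --- rather than the disjoint-union argument or the routine logarithmic estimates --- is where the real work lies. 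Any such improved gadget must still contain at least $\binom{2r+1}{2}$ edges concentrated in a subgraph of minimum degree $2r$, since $\chi_{DP}(G)>2\rho(G)$ forces the degeneracy of $G$ to be exactly $2r$; the extra room must therefore come from a finer analysis of how many covers of that subgraph are colouring-free, or from a gadget that multiplies these obstructions more efficiently than disjoint copies do.
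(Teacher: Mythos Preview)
Your construction is exactly the paper's: take $G$ to be a large number $t$ of disjoint copies of a complete graph $K_q$ (your $q=2r+1$), observe that the ``identity'' $k$-fold cover of $K_q$ has no independent transversal whenever $k<q$, and choose $t$ large enough that with probability exceeding $1-\epsilon$ at least one copy receives such a cover. Your refinement---counting all $(k!)^{2r}$ covers of $K_{2r+1}$ isomorphic to the identity cover rather than just the identity cover itself---is correct and strictly sharper than what the paper uses (the paper takes only the single event $A_r$ that the cover is literally the identity, giving $p_k\geq (k!)^{-\binom{q}{2}}$), but it does not affect the leading constant: both counts yield $\ln n\sim 4\rho^{3}\ln\rho$ and hence
\[
\left(\frac{\ln n}{\ln\ln n}\right)^{1/3}\sim\left(\frac{4}{3}\right)^{1/3}\!\rho\ \approx\ 1.10\,\rho.
\]

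Your diagnosis that the constant falls short is therefore correct---and in fact the paper's own proof has exactly this gap. Its final chain of inequalities asserts $q/6^{1/3}<(q-1)/2$, which is false for every $q$ since $6^{1/3}\approx 1.82<2$. So neither your argument nor the paper's delivers the exponent $1/3$ with leading constant~$1$; both yield only $\rho(G)\geq c\,(\ln n/\ln\ln n)^{1/3}$ for any $c<(3/4)^{1/3}\approx 0.91$, or equivalently $\rho(G)\geq(\ln n/\ln\ln n)^{\alpha}$ for every fixed $\alpha<1/3$. That already suffices for the role the proposition plays in the paper (demonstrating that the polylogarithmic density hypothesis in Theorem~\ref{Degeneracy} and Conjecture~\ref{conj:one_is_right} cannot be dropped). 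If one insists on the statement exactly as written, a genuinely sharper lower bound on the number of colouring-free $k$-fold covers of $K_q$ would be needed, as you suspected---but, contrary to what you anticipated, the paper does not supply one.
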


		%Proposition~\ref{pro:sparse} is interesting because it shows that for any $\epsilon\in(0,1)$ there are graphs with polylogarithmic density (and sparser) that are $k$-DP-colorable with probability $1-\epsilon$, and there are examples that are not $k$-DP-colorable with probability $1-\epsilon$.  
        Therefore, we cannot hope to get results similar to %those of %Proposition~\ref{prop:first} and
        Theorems~\ref{thm:slidingsharpness} and \ref{Degeneracy} for very sparse graphs. %based solely on their densities.
        Note that the bound $k \leq 2\rho(G)$ in Proposition~\ref{pro:sparse} is optimal: if $k > 2\rho(G)$, then $k$ must be strictly greater than the degeneracy of $G$, and thus $G$ is $k$-DP-colorable (with probability $1$) \cite{DP}.

        It follows from \cite[Theorem 1.3]{B3} that for each $\epsilon > 0$, there is $C_\epsilon>0$ such that every triangle-free regular graph $G$ with $\rho(G) \geq C_\epsilon$ satisfies $\chi_{DP}(G) \leq (1 + \epsilon) 2 \rho(G)/\ln \rho(G)$, and hence it is $k$-DP-colorable (with probability $1$) for all $k \geq (1+\epsilon) 2\rho(G)/\ln \rho(G)$. Together with Proposition~\ref{pro:sparse}, this shows that for very sparse graphs, it is impossible to determine up to a constant factor where the probability of DP-colorability transitions from approximately $0$ to approximately $1$ based on the maximum density alone.

    \subsubsection{Threshold functions}

        \noindent We can use the above results to answer questions about the asymptotic behavior of sequences of graphs.  Consider a sequence of graphs $\mathcal{G}=(G_\lambda)_{\lambda\in\N}$ and a sequence of integers $\kappa=(k_\lambda)_{\lambda\in\N}$.  We say that \emph{$\mathcal{G}$ is $\kappa$-DP-colorable with high probability}, or \emph{w.h.p.}, if
            \[
                \lim_{\lambda \to \infty} \mathbb{P}(\text{$G_\lambda$ is $\mathcal{H}(G_\lambda,k_\lambda)$-colorable}) \,=\, 1.
            \]
        Similarly, we say that \emph{$\mathcal{G}$ is non-$\kappa$-DP-colorable w.h.p.}~if \[
                \lim_{\lambda \to \infty} \mathbb{P}(\text{$G_\lambda$ is $\mathcal{H}(G_\lambda,k_\lambda)$-colorable}) \,=\, 0.
            \]
        %These are, of course, not the only possibilities.  It is easy to construct an example of a sequence of graphs $\mathcal{G}$, and a sequence of natural numbers $\kappa$, that is not $\kappa$-DP-colorable w.h.p., but fails to be non-$\kappa$-DP-colorable w.h.p..  The conditions for a sequence of graphs that lead to exactly these two possibilities are a topic for possible further research.
		A function $t_{\mathcal{G}} \colon \N \to \R$ is called a \emph{DP-threshold function for $\mathcal{G}$} if it satisfies the following two conditions:  if $k_\lambda=o(t_{\mathcal{G}}(\lambda))$, then $\mathcal{G}$ is non-$\kappa$-DP-colorable w.h.p., while  if $t_{\mathcal{G}}(\lambda)=o(k_\lambda)$, then $\mathcal{G}$ is $\kappa$-DP-colorable w.h.p. (Here the $o(\cdot)$ notation is used with respect to $\lambda \to \infty$.) Similarly, a function $t_{\mathcal{G}}$ is said to be a \emph{sharp DP-threshold function for $\mathcal{G}$} if it satisfies the following two conditions:  for any $\epsilon>0$,  $\mathcal{G}$ is non-$\kappa$-DP-colorable w.h.p.~when $k_\lambda\leq(1-\epsilon)t_{\mathcal{G}}(\lambda)$ for all large enough $\lambda$, and it is $\kappa$-DP-colorable w.h.p.~when $k_\lambda\geq(1+\epsilon)t_{\mathcal{G}}(\lambda)$ for all large enough $\lambda$.

        We can use Theorem~\ref{thm:slidingsharpness} to show that any sequence of graphs $\mathcal{G}=(G_\lambda)_{\lambda\in\N}$ whose densities are bounded below by $|V(G_\lambda)|^{1-o(1)}$ has $t_{\mathcal{G}}(\lambda)\defeq\rho(G_\lambda)/\ln(\rho(G_\lambda))$ as a sharp DP-threshold function, while for graph sequences with $\rho(G_\lambda) \geq \ln^{\omega(1)} n$, the same $t_\mathcal{G}$ is a DP-threshold function (but we do not know whether it is sharp).  This result is proved in \S\ref{subsection:Threshold} as Theorem~\ref{thm:threshold}. %Loosely speaking, Theorem~\ref{thm:threshold} extends, to almost all dense enough graphs, the discovery by the first named author~\cite{B} that the behavior of DP-coloring of triangle-free regular graphs undergoes a drastic transition when $k$ is around $\rho(G)/ \ln(\rho(G))$ for $k$-fold covers. However, in order to achieve this extension, we have to loosen the precision of the extremal result to an asymptotically almost sure behavior.
        The following two corollaries are special cases: %of this result.

		\begin{cor}\label{cor:Kn}
			For $\mathcal{G}=(K_n)_{n\in\N}$, the sequence of complete graphs, $t_{\mathcal{G}}(n)\defeq n/(2\ln n)$ is a sharp DP-threshold function.
		\end{cor}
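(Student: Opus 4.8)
The plan is to derive this as a direct specialization of Theorem~\ref{thm:slidingsharpness} together with Proposition~\ref{prop:first}, after computing the relevant density parameter for complete graphs. First I would observe that for $G = K_n$ we have $\rho(K_n) = d(K_n) = \binom{n}{2}/n = (n-1)/2$, since $K_n$ is the densest among its own subgraphs (adding vertices to a clique only increases the edge-to-vertex ratio). Consequently $\rho(K_n)/\ln \rho(K_n) = \big((n-1)/2\big)\big/\ln((n-1)/2)$, and a routine asymptotic estimate shows this is $(1+o(1)) \, n/(2\ln n)$ as $n \to \infty$; I would record this equivalence precisely enough to absorb the discrepancy into the $(1\pm\epsilon)$ slack in the two bounds.

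For the non-DP-colorability side, I would apply Proposition~\ref{prop:first}: given any fixed $\epsilon' > 0$, the hypothesis $\rho(K_n) \geq \exp(e/\epsilon')$ holds for all large $n$ since $\rho(K_n) \to \infty$, so whenever $k \leq \rho(K_n)/\ln\rho(K_n)$ the graph $K_n$ is $k$-DP-colorable with probability at most $\epsilon'$. If $k_n \leq (1-\epsilon)\, t_{\mathcal{G}}(n) = (1-\epsilon)\, n/(2\ln n)$, then since $t_{\mathcal{G}}(n) = (1+o(1))\,\rho(K_n)/\ln\rho(K_n)$ we get $k_n \leq \rho(K_n)/\ln\rho(K_n)$ for all large $n$; letting $\epsilon' \to 0$ gives $\mathbb{P}(K_n \text{ is } \mathcal{H}(K_n,k_n)\text{-colorable}) \to 0$, which is exactly non-$\kappa$-DP-colorability w.h.p.

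For the DP-colorability side, I would invoke Theorem~\ref{thm:slidingsharpness} with the parameter $s = 0$: then $\rho(K_n) = (n-1)/2 \geq n^{1-s} = n$ fails for $s=0$, so instead I should take $s$ slightly positive, say any fixed $s \in (0,1/3)$, for which $\rho(K_n) = (n-1)/2 \geq n^{1-s}$ does hold for all large $n$. With $s$ fixed, the factor $\big(1+\tfrac{s}{1-2s}\big)$ in \eqref{eq:2nd} can be made as close to $1$ as desired by choosing $s$ small; combined with the $(1+\epsilon/3)$-type factors and the asymptotic identity $\rho(K_n)/\ln\rho(K_n) = (1+o(1))\,n/(2\ln n)$, one checks that $k_n \geq (1+\epsilon)\, n/(2\ln n)$ implies the bound \eqref{eq:2nd} for a suitably small fixed $s$ and a suitably small $\epsilon$ in Theorem~\ref{thm:slidingsharpness}, for all large $n$. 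Theorem~\ref{thm:slidingsharpness} then yields $\mathbb{P}(K_n \text{ is } \mathcal{H}(K_n,k_n)\text{-colorable}) \geq 1 - \epsilon'$ for all large $n$, and letting $\epsilon' \to 0$ gives $\kappa$-DP-colorability w.h.p. Together the two sides show $t_{\mathcal{G}}(n) = n/(2\ln n)$ is a sharp DP-threshold function.

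The only mildly delicate point—and the one I would be most careful about—is bookkeeping the three small constants against each other: the $\epsilon$ in the statement of the corollary, the auxiliary $s \in (0,1/3)$ used to satisfy the density hypothesis of Theorem~\ref{thm:slidingsharpness}, and the internal $\epsilon$ of that theorem, all filtered through the $o(1)$ gap between $\rho(K_n)/\ln\rho(K_n)$ and $n/(2\ln n)$. This is entirely routine, but it is the place where a sloppy argument could go wrong, so I would present the chain of inequalities explicitly: choose $s$ small enough that $(1+s/(1-2s)) < 1 + \epsilon/4$, then apply Theorem~\ref{thm:slidingsharpness} with its internal parameter set to $\epsilon/4$, and verify that $n/(2\ln n) \sim \rho(K_n)/\ln\rho(K_n)$ absorbs the final $\epsilon/4$ for $n$ large.
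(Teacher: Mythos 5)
Your argument is correct and rests on the same two pillars as the paper's---Proposition~\ref{prop:first} for the lower side and Theorem~\ref{thm:slidingsharpness} for the upper side---but you apply them directly rather than routing through the paper's intermediate machinery. The paper first proves a general Theorem~\ref{thm:threshold} asserting that $\rho(G_\lambda)/\ln\rho(G_\lambda)$ is a sharp DP-threshold function whenever $\ln\rho(G_\lambda)/\ln|V(G_\lambda)| \to 1$, together with Observation~\ref{obs:equivThresh} stating that sharp thresholds are preserved under asymptotic equivalence $t'(\lambda)/t(\lambda)\to 1$; Corollary~\ref{cor:Kn} then follows in two lines by computing $\rho(K_n)=(n-1)/2$ and invoking both. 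Your inline $\epsilon$-bookkeeping---fixing a small constant $s$, calibrating the internal $\epsilon$ of Theorem~\ref{thm:slidingsharpness} against the corollary's $\epsilon$, and absorbing the $o(1)$ gap between $n/(2\ln n)$ and $\rho_n/\ln\rho_n$---is exactly what Theorem~\ref{thm:threshold} and Observation~\ref{obs:equivThresh} accomplish once and for all. In fact your choice of a fixed small $s$ rather than a $\lambda$-dependent sequence $s_\lambda\to 0$ sidesteps a mild subtlety about the dependence of $n_0$ on $s$ in Theorem~\ref{thm:slidingsharpness}. So the mathematical content matches; the paper's version is simply more modular, which also yields Corollary~\ref{cor:Knn...} at no extra cost.
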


		\begin{cor}\label{cor:Knn...}
			For $\mathcal{G}=(K_{m \times n})_{n\in\N}$ with constant $m\geq2$, the sequence of complete $m$-partite graphs with $n$ vertices in each part, $t_{\mathcal{G}}(n)\defeq (m-1)n/(2\ln n)$ is a sharp DP-threshold function.
		\end{cor}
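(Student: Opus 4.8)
The plan is to reduce the statement to Proposition~\ref{prop:first} and Theorem~\ref{thm:slidingsharpness} once the maximum density of $K_{m\times n}$ is pinned down. Write $G_n\defeq K_{m\times n}$, so that $N\defeq|V(G_n)|=mn$, and recall $t_{\mathcal G}(n)=(m-1)n/(2\ln n)$. First I would compute $\rho(G_n)=(m-1)n/2$: since $\deg_H(v)\leq\Delta(G)$ for every vertex $v$ in every subgraph $H$ of a graph $G$, one has $\rho(G)\leq\Delta(G)/2$ for every graph $G$, and here $\Delta(G_n)=(m-1)n$; for the matching lower bound it suffices that $G_n$ itself has $\binom m2 n^2$ edges on $mn$ vertices, so $d(G_n)=(m-1)n/2$. (A direct Cauchy--Schwarz estimate on the part sizes of an arbitrary subgraph also gives the upper bound.) Hence $\rho(G_n)=\tfrac{m-1}{2m}N=\Theta(N)$ and, as $\ln\rho(G_n)=\ln n+O(1)$,
\[
  \frac{\rho(G_n)}{\ln\rho(G_n)} \,=\, \frac{(m-1)n/2}{\ln n+O(1)} \,=\, (1+o(1))\,t_{\mathcal G}(n).
\]

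For the non-colorable half of the definition, fix $\epsilon>0$ and suppose $k_n\leq(1-\epsilon)t_{\mathcal G}(n)$ for all large $n$; by the display, $k_n\leq(1-\epsilon/2)\rho(G_n)/\ln\rho(G_n)\leq\rho(G_n)/\ln\rho(G_n)$ for all large $n$. Given any $\delta>0$, since $\rho(G_n)\to\infty$ we have $\rho(G_n)\geq\exp(e/\delta)$ once $n$ is large, so Proposition~\ref{prop:first} bounds $\mathbb P(G_n\text{ is }\mathcal H(G_n,k_n)\text{-colorable})$ by $\delta$ for all large $n$. As $\delta>0$ was arbitrary, $\mathcal G$ is non-$\kappa$-DP-colorable w.h.p.

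For the colorable half, fix $\epsilon>0$ and suppose $k_n\geq(1+\epsilon)t_{\mathcal G}(n)$ for all large $n$. Choose $s\in(0,1/3)$ small enough that $(1+\epsilon/2)\bigl(1+\tfrac{s}{1-2s}\bigr)<1+\epsilon$, which is possible since $1+\tfrac{s}{1-2s}\to1$ as $s\to0^+$. For any $\delta\in(0,\epsilon/2]$, apply Theorem~\ref{thm:slidingsharpness} with parameters $\delta$ and $s$: there is $n_0$ such that every graph on at least $n_0$ vertices with maximum density at least $N^{1-s}$ is $k$-DP-colorable with probability at least $1-\delta$ whenever $k\geq(1+\delta)\bigl(1+\tfrac{s}{1-2s}\bigr)\rho/\ln\rho$. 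For all large $n$ we have $N=mn\geq n_0$ and $\rho(G_n)=\tfrac{m-1}{2m}N\geq N^{1-s}$ (as $s>0$), and by the display
\begin{align*}
  k_n &\,\geq\, (1+\epsilon)\,t_{\mathcal G}(n) \,=\, (1+o(1))(1+\epsilon)\,\frac{\rho(G_n)}{\ln\rho(G_n)} \\
  &\,\geq\, (1+\delta)\Bigl(1+\tfrac{s}{1-2s}\Bigr)\frac{\rho(G_n)}{\ln\rho(G_n)},
\end{align*}
the last inequality holding for large $n$ because $(1+\delta)(1+\tfrac{s}{1-2s})\leq(1+\epsilon/2)(1+\tfrac{s}{1-2s})<1+\epsilon$ while the factor $1+o(1)$ tends to $1$. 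Hence $G_n$ is $k_n$-DP-colorable with probability at least $1-\delta$ for all large $n$; letting $\delta\to0$ gives $\kappa$-DP-colorability w.h.p. Together with the previous paragraph, $t_{\mathcal G}(n)=(m-1)n/(2\ln n)$ is a sharp DP-threshold function for $\mathcal G$.

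The real content is the density computation; the rest is bookkeeping, the only delicate point being the choice of $s$ (and the restriction $\delta\leq\epsilon/2$) in the colorable half, which ensures the slack in the constant $1+\tfrac{s}{1-2s}$ of Theorem~\ref{thm:slidingsharpness} is absorbed by the factor $1+\epsilon$. (Once $\rho(K_{m\times n})$ is known, the corollary is also immediate from the general sharp-threshold statement, Theorem~\ref{thm:threshold}, applied to sequences of graphs of density $N^{1-o(1)}$.)
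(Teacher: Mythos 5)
Your proof is correct and essentially reproduces the paper's approach: compute $\rho(K_{m\times n})=(m-1)n/2$ and apply the density-threshold results. Where the paper simply cites Theorem~\ref{thm:threshold} (whose hypothesis \eqref{eq:threshold_limit} holds since $\rho$ and $|V|$ are both linear in $n$) together with Observation~\ref{obs:equivThresh} to swap $\ln\rho$ for $\ln n$, you unroll the same $\epsilon$--$\delta$ bookkeeping by invoking Proposition~\ref{prop:first} and Theorem~\ref{thm:slidingsharpness} directly---a step you acknowledge is a detour---and you additionally justify the density value via $\rho(G)\leq\Delta(G)/2$ plus the exact edge count, which the paper asserts without proof.
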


        The existence of a (not necessarily sharp) DP-threshold function of order $\Theta(n/\ln n)$ for the sequence of complete graphs was recently proved %independently
        by Dvo\v{r}\'ak and Yepremyan using different methods \cite[Theorem 1.3]{DY}.

        From Theorems~\ref{thm:slidingsharpness} and~\ref{Degeneracy}, we know that sequences of graphs with at least polylogarithmic densities have DP-threshold functions, while very dense graph sequences have sharp DP-threshold functions.  Proposition~\ref{pro:sparse} shows that there is more going on, and leads to the following question.

        \begin{ques}\label{ques:1}
				\normalfont Under what conditions on $\mathcal{G}=(G_\lambda)_{\lambda\in\N}$ will $t_{\mathcal{G}}(\lambda)=\rho(G_\lambda)/\ln\rho(G_\lambda)$ be a DP-threshold function or a sharp DP-threshold function for $\mathcal{G}$?
		\end{ques}

        %One way to address this question is to look at the sequence of subgraphs formed by removing all vertices with small enough degrees to be greedily colored.  Beyond this, more work needs to be done to better characterize when sequences of graphs have these threshold functions.

    \subsubsection{Fractional DP-coloring}

		\noindent We now extend our analysis of DP-colorability with respect to random covers to %the notion of DP-threshold function to
        fractional DP-coloring, which was introduced by Kostochka, Zhu, and the first named author in \cite{BKZ}.  For $a$, $b\in\N$, a \emph{b-fold transversal} of some $a$-fold cover $\mathcal{H}=(L,H)$ of $G$ is a set $T \subseteq V(H)$ such that for every $v\in V(G)$, $\left|L(v)\cap T\right|=b$.  An \emph{independent $b$-fold transversal} is a $b$-fold transversal that is an independent set in $H$ (here we rely on our convention that the sets $L(v)$ for $v \in V(G)$ are independent).  If $\mathcal{H}$ has an independent $b$-fold transversal, we say that $G$ is \emph{$(\mathcal{H},b)$-colorable}.  %Notice here that we use the phrase independent $b$-fold transversal as opposed to quasi-independent $b$-fold transversal as has been used in other papers~\cite{BKZ}.  This is because we are considering each list in the cover graph as an independent set of vertices as opposed to a clique.
        For $a$, $b\in\N$ with $a\geq b$, we say that a graph $G$ is \emph{$(a,b)$-DP-colorable} if for every $a$-fold cover $\mathcal{H}$ of $G$, $G$ is $(\mathcal{H},b)$-colorable.  The \emph{fractional DP-chromatic number} is
        \begin{equation*}
            \chi_{_{DP}}^*(G) \,=\, \inf\{a/b \,:\, G\text{ is $(a,b)$-DP-colorable}\}.
        \end{equation*}
       When $\mathbb{P}(\text{$G$ is $(\mathcal{H}(G,a),b)$-colorable}) = p$, we say that $G$ is \emph{$(a,b)$-DP-colorable with probability $p$}. (Here $\mathcal{H}(G,a)$ is the random $a$-fold cover of $G$ defined in \S\ref{subsec:randomCover}.) Given $k \geq 1$, we define
       \[
        p^\ast(G,k) \,=\, \sup \{p \,:\, \exists a,\, b \in \N \text{ s.t.~$a/b \leq k$ and $G$ is $(a,b)$-DP-colorable with probability $p$}\}.
       \]
        We obtain two results for fractional-DP-coloring. The first is Proposition~\ref{pro:fractionalfirst}, which is a general result that implies Proposition~\ref{prop:first}.  %\todo{AB: Why Lemma and not Proposition or Theorem?} \todo{Should we say that this lemma is essentially present in \cite{BKZ}?}%Change Lemma to Proposition

        \begin{pro}\label{pro:fractionalfirst}
        Let $\epsilon>0$ and let $G$ be a nonempty graph with %$\rho(G)\geq \exp(\exp(1-\ln\epsilon))$.
            $\rho(G)\geq \exp(e/\epsilon)$. If $1\leq k\leq\rho(G)/\ln\rho(G)$, then $p^\ast(G,k) \leq \epsilon$.
            %\color{blue}{[Consider a sequence of graphs $\mathcal{G}=(G_\lambda)_{\lambda\in\N}$ with $\rho(G_\lambda)\to\infty$ as $\lambda\to\infty$.  Let $\kappa=(k_\lambda)_{\lambda\in\N}$ be a sequence of real numbers.  If there exists $\Lambda\in\N$ such that $1\leq k_\lambda\leq\frac{\rho(G_\lambda)}{\ln\rho(G_\lambda)}$ for all $\lambda\geq\Lambda$, then $\mathcal{G}$ is non-$\kappa$-fractional-DP-colorable w.h.p..]}

		\end{pro}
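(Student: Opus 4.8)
The plan is to prove the equivalent statement that for every pair $a,b\in\N$ with $a/b\leq k$, the probability that the random $a$-fold cover $\mathcal{H}(G,a)$ admits an independent $b$-fold transversal is at most $\epsilon$; this is exactly what ``$p^\ast(G,k)\leq\epsilon$'' unwinds to. We may assume $\epsilon<1$ (otherwise there is nothing to prove), and since $\rho(G)\geq\exp(e/\epsilon)>1$ forces $G$ to have an edge, we may also assume $a\geq 2b$: if $a<2b$ then any two $b$-element subsets of $[a]$ intersect, so the perfect matching placed on any edge of $G$ forbids every candidate $b$-fold transversal and the probability in question is $0$. Next I would pass to a nonempty subgraph $G'=G[U]$ attaining the maximum density, so that $m':=|E(G')|=\rho\,n'$ with $n':=|U|$ and $\rho:=\rho(G)$: since every independent $b$-fold transversal of a cover of $G$ restricts to one of the induced sub-cover of $G'$, and that induced sub-cover is governed only by the independent uniformly random matchings on $E(G')$ and hence is distributed as $\mathcal{H}(G',a)$, it suffices to bound $\mathbb{P}\bigl(G'\text{ is }(\mathcal{H}(G',a),b)\text{-colorable}\bigr)$.

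For this I would apply the First Moment Method. Let $N$ be the number of independent $b$-fold transversals of $\mathcal{H}(G',a)$, so that $\mathbb{P}(G'\text{ is }(\mathcal{H}(G',a),b)\text{-colorable})\leq\E[N]$. Enumerating candidates by tuples $(B_v)_{v\in U}$ of $b$-element subsets $B_v\subseteq[a]$ and using that the matchings on distinct edges of $G'$ are independent, the probability that a fixed tuple is independent in $H$ factors as $\prod_{e=v_qv_r\in E(G')}\mathbb{P}(\sigma_e(B_q)\cap B_r=\emptyset)$; and since $\sigma_e(B_q)$ is a uniformly random $b$-subset of $[a]$, each factor equals $\binom{a-b}{b}/\binom{a}{b}$ independently of the sets involved. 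Hence $\E[N]=\binom{a}{b}^{n'}\bigl(\binom{a-b}{b}/\binom{a}{b}\bigr)^{m'}$. Plugging in $\binom{a-b}{b}/\binom{a}{b}=\prod_{i=0}^{b-1}\frac{a-b-i}{a-i}\leq(1-b/a)^{b}\leq e^{-b^{2}/a}$, the bound $\binom{a}{b}\leq(ea/b)^{b}$, and $m'=\rho\,n'$, I get
\[
\E[N]\ \leq\ \Bigl[(ea/b)^{b}\,e^{-b^{2}\rho/a}\Bigr]^{n'}\ =\ \Bigl[\exp\!\bigl(b(\ln(et)-\rho/t)\bigr)\Bigr]^{n'},\qquad t:=a/b.
\]
Since $1\leq t\leq k\leq\rho/\ln\rho$, one has $\ln(et)=1+\ln t\leq1+\ln\rho-\ln\ln\rho$ and $\rho/t\geq\ln\rho$, so $\ln(et)-\rho/t\leq1-\ln\ln\rho$; and $\rho\geq\exp(e/\epsilon)$ yields $\ln\ln\rho\geq\ln(e/\epsilon)=1+\ln(1/\epsilon)$, whence $\ln(et)-\rho/t\leq\ln\epsilon$. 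Therefore the bracket is at most $\epsilon^{b}\leq\epsilon$, and $\E[N]\leq\epsilon^{n'}\leq\epsilon$ because $n'\geq1$. Taking $b=1$ (and $a=k$) recovers Proposition~\ref{prop:first}.

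The argument is a direct extension of the proof of Proposition~\ref{prop:first} (itself a variant of \cite[Theorem~1.6]{B}), so I do not expect a genuine conceptual obstacle. The step requiring the most care is isolating the per-edge ``collision'' probability $\binom{a-b}{b}/\binom{a}{b}$ for $b$-fold transversals and confirming that the crude estimate $e^{-b^{2}/a}$ suffices; after that, the work is elementary manipulation of the inequality $\ln(et)-\rho/t\leq\ln\epsilon$ and careful bookkeeping with the hypothesis $\rho(G)\geq\exp(e/\epsilon)$. One should also remember to dispose of the two degenerate cases ($\epsilon\geq1$ and $a<2b$), where the claim is immediate.
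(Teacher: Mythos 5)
Your proof is correct and takes essentially the same route as the paper: reduce to a densest subgraph, apply the First Moment Method to the count of independent $b$-fold transversals using the per-edge survival probability $\binom{a-b}{b}/\binom{a}{b}$, and then exploit $a/b\leq k\leq\rho/\ln\rho$ together with $\rho\geq\exp(e/\epsilon)$ to drive the expectation below $\epsilon$. The only cosmetic difference is that you package the elementary estimates as $\binom{a}{b}\leq(ea/b)^b$ and $\binom{a-b}{b}/\binom{a}{b}\leq e^{-b^2/a}$ up front, whereas the paper bounds $\ln\binom{a}{b}+\rho\ln\bigl(\binom{a-b}{b}/\binom{a}{b}\bigr)$ term by term; the two calculations are equivalent.
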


        Proposition~\ref{pro:fractionalfirst} is a slight strengthening of \cite[Theorem 1.9]{BKZ} due to Kostochka, Zhu, and the first named author and is proved using the First Moment Method. We present the proof in \S\ref{subsec:FM}.
  
        %As stated earlier, this will be proved by applying Markov's inequality in \S\ref{sec:nonKCol}.
        Our second result is a version of Theorem~\ref{Degeneracy} for fractional DP-coloring:

        \begin{thm}\label{thm:fractionalDegeneracy}

            For all $\epsilon>0$, there is $d_0\in\N$ such that the following holds.  Let $G$ be a graph with degeneracy $d\geq d_0$ and let $k\geq(1+\epsilon)d/\ln d$.  Then $p^\ast(G,k) \geq 1-\epsilon$.
            
			%\color{blue}{[Consider $\mathcal{G}=(G_\lambda)_{\lambda\in\N}$, a sequence of graphs with the degeneracy of $G_\lambda$ being $d_\lambda$, where $d_\lambda\to\infty$ as $\lambda\to\infty$.  If there is some $\Lambda\in\N$ and $\epsilon>0$ such that $k_\lambda\geq(1+\epsilon)\frac{d_\lambda}{\ln d_\lambda}$ for all $\lambda\geq\Lambda$ and $\kappa=(k_\lambda)_{\lambda\in\N}$, then $\mathcal{G}$ is $\kappa$-fractional-DP-colorable w.h.p..]}
		\end{thm}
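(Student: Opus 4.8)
The plan is to mimic the greedy-algorithm strategy used for Theorem~\ref{Degeneracy}, but instead of building a single independent transversal we build an independent $b$-fold transversal for a suitable integer $b$ and a suitable cover size $a$ with $a/b \leq k$. Fix a degeneracy ordering $v_1, \dots, v_n$ of $G$, so that each $v_i$ has at most $d$ neighbors among $v_1, \dots, v_{i-1}$. Choose $b$ to be a large constant (depending on $\epsilon$) and set $a \defeq \lfloor kb \rfloor$, so that $a/b \leq k$ and, because $d \geq d_0$ is large, we still have $a \geq (1+\epsilon/2)\, b\, d/\ln d$ once $b$ is fixed; the point of passing to the $b$-fold setting is that after rescaling we are choosing $b$ colors out of $a \approx (1+\epsilon) b d/\ln d$ at each vertex, which gives a large absolute "budget" $b$ that concentrates well even when $d$ is only polylogarithmic-free — in fact here $d$ can be any large constant, which is exactly why the hypothesis is merely $d \geq d_0$ rather than $d \geq \ln^{2/\epsilon} n$.

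First I would process the vertices in the degeneracy order. Maintaining for each vertex $v_i$ a set $A_i \subseteq L(v_i)$ of colors still "available" (i.e.\ not matched, under the revealed perfect matchings, to a previously chosen color at an earlier neighbor), I would, upon reaching $v_i$, reveal the matchings on the at-most-$d$ back-edges from $v_i$, observe that each previously chosen color at a back-neighbor kills at most one color in $L(v_i)$, so that $|A_i| \geq a - bd$ deterministically; then I would select a uniformly random $b$-subset of $A_i$ to be the colors chosen at $v_i$. This produces an independent $b$-fold transversal provided the process never gets stuck, i.e.\ provided $|A_i| \geq b$ at every step, which is automatic since $a - bd \geq (1+\epsilon/2) bd/\ln d - bd$ — wait, that is negative; so in fact the available-color bound must be tracked more carefully, exactly as in the proof of Theorem~\ref{Degeneracy}: the relevant quantity is not "colors killed by back-neighbors" but rather, for the analysis to go through, one shows that with high probability the number of colors at $v_i$ that survive all of its *forward* conflicts is still at least $b$. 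Concretely, I would reveal matchings edge-by-edge in the forward direction and, for a fixed set of $b$ tentative colors at $v_i$, bound the probability that a later neighbor's choice collides; the expected number of surviving $b$-subsets is large, and the key step is concentration.

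The main obstacle, and the technical heart, is the concentration argument. As in Theorem~\ref{Degeneracy}, the events "color $c \in L(v_i)$ is blocked" are negatively correlated across $c$ (revealing a matching that blocks one color makes it less likely another specific color is blocked by the same edge), so I would invoke the Panconesi--Srinivasan form of the Chernoff--Hoeffding bound to show that, with probability at least $1 - n^{-2}$ say, each $v_i$ retains at least $b$ usable colors; a union bound over the $n$ vertices then gives failure probability $o(1)$, and in particular $\leq \epsilon$ once $n$ — equivalently $d$, via $d \geq d_0$ and a separate easy argument handling small $n$ — is large. The one genuinely new wrinkle compared to Theorem~\ref{Degeneracy} is that we are choosing a random $b$-set rather than a single random color, so the "color $c$ is chosen at an earlier neighbor $u$" event has probability $b/|A_u|$ rather than $1/|A_u|$; this scales the expected number of blocked colors at $v_i$ by a factor of $b$, which is precisely compensated by the cover size being inflated to $a \approx kb$, so the ratio of blocked colors to available colors is unchanged and the $(1+\epsilon)d/\ln d$ threshold is preserved. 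I would organize the write-up so that setting $b$ large makes the additive $O(1/\ln d)$-type slack from the floor in $a = \lfloor kb\rfloor$ and from lower-order terms negligible, after which the computation is identical in form to that of \S\ref{sec:degeneracy}.
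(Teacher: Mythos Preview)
Your overall architecture --- run the Greedy Transversal Procedure in degeneracy order, pick $b$ colors per vertex from an $a$-fold random cover with $a/b \leq k$, and use the Panconesi--Srinivasan bound via negative correlation of the ``color $c$ is blocked'' events --- matches the paper's approach exactly. The place where your proposal breaks is the choice of $b$.

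You fix $b$ to be a constant depending only on $\epsilon$. With that choice, the per-vertex failure probability you get from Lemma~\ref{Bernshteyn1} is at best $\exp(-c\,\mathbb{E}(X_i))$ with $\mathbb{E}(X_i) \geq a(1-b/a)^d \approx b\, d^{\epsilon/3}$; since the hypothesis is only $d \geq d_0(\epsilon)$, the degeneracy $d$ may itself be a constant independent of $n$, and then this failure probability is a fixed constant. The union bound over $n$ vertices therefore blows up, and your claim ``with probability at least $1 - n^{-2}$'' cannot be obtained. Your parenthetical ``$n$ --- equivalently $d$, via $d \geq d_0$'' is precisely the mistake: $d \geq d_0$ gives no control whatsoever on $n$, so there is no ``separate easy argument handling small $n$''. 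The paper's fix is simple but essential: because $p^\ast(G,k)$ is a supremum over \emph{all} pairs $(a,b)$ with $a/b \leq k$, one may take $b$ (and hence $a$) sufficiently large as a function of $n$. Then $\exp(-\mathbb{E}(X_i)/8) \leq e^{-b/4} < \epsilon/n$, and the union bound goes through.

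A secondary point: your mid-paragraph pivot to ``forward conflicts'' is a wrong turn. The analysis remains about back-neighbors throughout; the reason the crude deterministic bound $|A_i| \geq a - bd$ is not what is used is that the \emph{random} matchings on the back-edges make the set of blocked colors at $v_i$ a union of $d_i^- \leq d$ independent uniform $b$-subsets of $[a]$, so the expected number of surviving colors is $a(1-b/a)^d$, which is large even though $a - bd$ may be negative. There is no need to look at forward edges or later neighbors. Also, the paper selects the $b$ colors with smallest indices (deterministically) rather than a uniformly random $b$-subset; this is harmless for your outline but keeps the analysis of independence from the unrevealed matchings cleaner.
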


        Theorem~\ref{thm:fractionalDegeneracy} extends the result of Theorem~\ref{Degeneracy} to any graph whose degeneracy is high enough as a function of $\epsilon$ (regardless of how small it is when compared to the number of vertices in the graph), at the cost of replacing DP-coloring with fractional DP-coloring.  In particular, Proposition~\ref{pro:sparse} fails in the fractional setting. %If we bound the maximum density of a graph below by some function of the number of vertices that grows to infinity, then there will be a large enough number of vertices that guarantee a high enough degeneracy to apply Theorem~\ref{thm:fractionalDegeneracy}.
        The proof of Theorem~\ref{thm:fractionalDegeneracy} is analogous to that of Theorem~\ref{Degeneracy}  and will be presented in \S\ref{sec:fractional}. %Using these results, we are able to get the very generally applicable fractioinal-DP-threshold function $t_{\mathcal{G}}(\lambda)=\rho(G_\lambda)/\ln(\rho(G_\lambda))$; unfortunately, we were unable to get a sharp fractional-DP-threshold function because we were unable to extend Theorem~\ref{thm:slidingsharpness} to the fractional-DP-coloring setting.

        \section{DP-colorability with probability close to $0$}\label{sec:nonKCol}

        \subsection{A First Moment argument}\label{subsec:FM}

        \noindent In this subsection we prove Proposition~\ref{pro:fractionalfirst}, which clearly implies Proposition~\ref{prop:first}. %To prove Proposition~\ref{prop:first}, we start with two results from the fractional generalization of DP-colorability.

		\begin{obs}\label{subgraph_observation}
            If $G'$ is a subgraph of a graph $G$, then $p^\ast(G, k) \leq p^\ast(G',k)$ for all $k \geq 1$.
		\end{obs}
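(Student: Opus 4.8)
The plan is to show that the probability functional $p^\ast(\cdot,k)$ is monotone under passing to subgraphs by exhibiting a distribution-preserving ``projection'' operation on random covers. Concretely, I fix $a,b\in\N$ with $a/b\le k$ such that $G$ is $(a,b)$-DP-colorable with some probability $p$, and I aim to show that $G'$ is then $(a,b)$-DP-colorable with probability at least $p$; taking suprema over all admissible pairs $(a,b)$ immediately gives $p^\ast(G,k)\le p^\ast(G',k)$.

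First I would set up the projection. Given any cover $\mathcal{H}=(L,H)$ of $G$, let $\mathcal{H}\!\restriction\! G' \defeq (L',H')$, where $L'$ is the restriction of $L$ to $V(G')$ and $H'$ is the subgraph of $H$ induced on $\bigcup_{v\in V(G')}L(v)$; equivalently, $E(H')=\bigcup_{uv\in E(G')}E_H(L(u),L(v))$, which makes sense because $E(G')\subseteq E(G)$. A routine check confirms that $\mathcal{H}\!\restriction\! G'$ is a cover of $G'$, that it is an $a$-fold (resp.\ full) cover whenever $\mathcal{H}$ is, and — this is the one place the random model of \S\ref{subsec:randomCover} is used — that $\mathcal{H}(G,a)\!\restriction\! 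G'$ has the same distribution as $\mathcal{H}(G',a)$, since the per-edge matchings are chosen independently and uniformly, so discarding the coordinates indexed by $E(G)\setminus E(G')$ leaves exactly the law of $\mathcal{H}(G',a)$. Next I would observe that colorability is preserved under projection: if $T\subseteq V(H)$ is an independent $b$-fold transversal of $\mathcal{H}$, then $T'\defeq T\cap\bigcup_{v\in V(G')}L(v)$ satisfies $|L(v)\cap T'|=b$ for every $v\in V(G')$ and is independent in $H'$ (being a subset of an independent set of $H$ contained in $V(H')$). Hence $G$ being $(\mathcal{H},b)$-colorable implies $G'$ is $(\mathcal{H}\!\restriction\! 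G',b)$-colorable.

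Applying the last implication pointwise to $\mathcal{H}=\mathcal{H}(G,a)$ yields an inclusion of events, and then the distributional identity gives
\begin{align*}
p \;&=\; \mathbb{P}\bigl(G \text{ is } (\mathcal{H}(G,a),b)\text{-colorable}\bigr)\\
&\le\; \mathbb{P}\bigl(G' \text{ is } (\mathcal{H}(G,a)\!\restriction\! G',b)\text{-colorable}\bigr)
\;=\; \mathbb{P}\bigl(G' \text{ is } (\mathcal{H}(G',a),b)\text{-colorable}\bigr),
\end{align*}
so $G'$ is $(a,b)$-DP-colorable with probability $p'\ge p$; since $a/b\le k$, the pair $(a,b)$ is admissible for $G'$ as well, and the supremum defining $p^\ast(G',k)$ dominates the one defining $p^\ast(G,k)$. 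There is no substantive obstacle in this argument — the only mild care needed is the bookkeeping when $V(G')\subsetneq V(G)$ (so that one intersects the transversal with the correct sublist system) and the verification that projection is law-preserving, which is immediate from independence of the edge matchings. I would also remark that the $a/b\le k$ constraint plays no role in the monotonicity step, so the same reasoning shows, e.g., that the ordinary ($b=1$) DP-colorability probability is itself monotone under taking subgraphs.
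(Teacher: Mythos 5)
Your proof is correct and takes essentially the same approach as the paper: restrict the random cover to the subgraph, note that the restriction has the same law as a random cover of $G'$, observe that an independent $b$-fold transversal restricts to one of the subcover, and take suprema over admissible $(a,b)$. The only difference is cosmetic — you spell out the restricted transversal $T'$ explicitly, whereas the paper states the implication in the contrapositive.
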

		\begin{proof}
            Given a cover $\mathcal{H} = (L,H)$ of $G$, the \emph{subcover of $\mathcal{H}$ corresponding to $G'$} is $\mathcal{H}'=(L',H')$ where $L'$ is the restriction of $L$ to $V(G')$ and $H'$ is the subgraph of $H$ with vertex set $V(H')=\bigcup_{u\in V(G')}L(u)$ that retains those and only those edges of $H$ that belong to the matchings corresponding to the edges of $G'$. %and edge set
        %\[
        %E(H') \,=\,\{xy \,:\, xy \in E(H) \text{ and there is an edge } uv \in E(G') \text{ with } x \in L(u) \text{ and } y \in L(v)\}.
        %\]
        Clearly, $\mathcal{H}'$ is a cover of $G'$.
            
			Now suppose that $a$, $b\in\N$ satisfy $a/b\leq k$. %By definition,
			%\begin{equation*}
			%	\mathbb{P}\left(\mathcal{H}(G',a)\text{ has an independent $b$-fold transversal}\right) \,\leq\, p^\ast(G',k).
			%\end{equation*}
			Note that the subcover $\mathcal{H'}$ of $\mathcal{H}(G,a)$ corresponding to $G'$ is a random  cover of $G'$ with the same distribution as $\mathcal{H}(G',a)$. If $\mathcal{H}'$ has no independent $b$-fold transversal, $\mathcal{H}$ also has no independent $b$-fold transversal. Thus,
			\begin{align*}
				&\mathbb{P}\left(\mathcal{H}(G,a)\text{ has an independent $b$-fold transversal}\right)\\
                \leq\,&\mathbb{P}\left(\mathcal{H}(G',a)\text{ has an independent $b$-fold transversal}\right) \,\leq\, p^\ast(G',k).
			\end{align*}
			Since this holds for all $a/b \leq k$, we conclude that $p^\ast(G,k) \leq p^\ast(G',k)$, as desired.
		\end{proof}

		We now prove Proposition~\ref{pro:fractionalfirst}, which we restate here for the reader's convenience.
 
		\begin{custompro}{\bf\ref{pro:fractionalfirst}}
            Let $\epsilon>0$ and let $G$ be a nonempty graph with %$\rho(G)\geq \exp(\exp(1-\ln\epsilon))$.
            $\rho(G)\geq \exp(e/\epsilon)$. If $1\leq k\leq\rho(G)/\ln\rho(G)$, then $p^\ast(G,k) \leq \epsilon$.
		\end{custompro}
		\begin{proof}
			Suppose $a$, $b\in\N$ satisfy $1\leq a/b\leq k$. We need to argue that $G$ is $(a,b)$-DP-colorable with probability at most $\epsilon$. By Observation~\ref{subgraph_observation}, we may assume that $\rho(G)$ equals the density of $G$.  For ease of notation, let $n=|V(G)|$, $m=|E(G)|$, and $\rho=\rho(G)=m/n$.  

			We enumerate all $b$-fold transversals of $\mathcal{H}(G,a)=(L,H)$ as $\{I_i\}_{i=1}^s$ where $s={a\choose b}^n$.  Let $E_i$ be the event that $I_i$ is an independent $b$-fold transversal and let $X_i$ be the indicator random variable for the event $E_i$. Define $X=\sum_{i=1}^s X_i$.  Note that $X$ is the random variable that counts the number of independent $b$-fold transversals of $\mathcal{H}(G,a)$.  We have
			\begin{equation*}
				\mathbb{E}(X_i) \,=\,\left(\frac{{{a-b}\choose b}}{{a\choose b}}\right)^{m},\hspace{0.25in}\text{which implies}\hspace{0.25in}\mathbb{E}(X)\,=\,\sum_{i=1}^{s}\mathbb{E}\left(X_i\right)\,=\,{a\choose b}^{n}\left(\frac{{{a-b}\choose b}}{{a\choose b}}\right)^{m}.
			\end{equation*}
			Notice that, since $m > 0$, $a<2b$ implies $\mathbb{E}(X)=0$.  We will now establish an upper bound on $\mathbb{E}(X)$ when $a\geq2b$.  We see
			\begin{equation*}
				\mathbb{E}(X) \,=\,\exp\left(n\ln {a\choose b}+m\ln\left(\frac{{{a-b}\choose b}}{{a\choose b}}\right)\right) \,=\,\exp\left(n\left(\ln {a\choose b}+\rho\ln\left(\frac{{{a-b}\choose b}}{{a\choose b}}\right)\right)\right).
			\end{equation*}
			%\begin{cl}\label{fracClaim}
			%	The following holds
			%	\begin{equation*}
			%		\ln {a\choose b}+\rho\ln\left(\frac{{{a-b}\choose b}}{{a\choose b}}\right) \,\leq\, b(1-\ln\ln\rho).
			%	\end{equation*}
			%\end{cl}
			Now we write
			\begin{align*}
				\ln {a\choose b}+\rho\ln\left(\frac{{{a-b}\choose b}}{{a\choose b}}\right)\,&=\,\ln \left(\prod_{j=0}^{b-1}\frac{a-j}{b-j}\right)+\rho\ln\left(\prod_{j=0}^{b-1}\frac{a-b-j}{a-j}\right)\\
				&\leq\,\sum_{j=0}^{b-1}\left(\ln \left(\frac{a}{b-j}\right)+\rho\ln\left(1 - \frac{b}{a}\right)\right)\\
				&=\,\sum_{j=0}^{b-1}\left(\ln\left(\frac{a}{b}\right)+\ln b-\ln(b-j)+\rho\ln\left(1-\frac{b}{a}\right)\right)\\
				[\text{since $a/b \leq k$}]\qquad\qquad&\leq\,\sum_{j=0}^{b-1}\left(\ln k+\rho\ln\left(1-\frac{1}{k}\right)\right)+b\ln b-\ln b!\\
				[\text{since $\ln b! \geq b \ln b - b$}]\qquad\qquad&\leq\,\sum_{j=0}^{b-1}\left(1 + \ln k+\rho\ln\left(1-\frac{1}{k}\right)\right)\\
                [\text{since $\ln(1-x) \leq -x$ for $x < 1$}]\qquad\qquad&\leq\,\sum_{j=0}^{b-1}\left(1 + \ln k -  \frac{\rho}{k}\right)\\
                [\text{since $k \leq \rho/\ln \rho$}]\qquad\qquad&\leq\,\sum_{j=0}^{b-1}\left(1 + \ln \left(\frac{\rho}{\ln \rho}\right) -  \ln \rho\right)\\
                &=\, b(1 - \ln\ln \rho).
			\end{align*}
			It follows that
            \begin{equation}\label{Ineq:rho}
				\mathbb{E}(X) \,\leq\, \exp\left(n b(1 - \ln \ln \rho)\right) \,\leq\, \exp\left(1 - \ln \ln \rho\right) \,\leq\, \epsilon,
			\end{equation}
            since $\rho \geq \exp(e/\epsilon)$ by assumption. Therefore, by Markov's Inequality we have
            \begin{equation*}
                \mathbb{P}(X=0) \,=\, 1-\mathbb{P}(X\geq1) \,\geq\, 1-\mathbb{E}(X) \,\geq\ 1-\epsilon. \qedhere
            \end{equation*}
		\end{proof}

    \subsection{Sparse graphs with low probability of $k$-DP-colorability}\label{sec:sparse}

    \noindent In this subsection we construct graphs $G$ with polylogarithmic density that have low probability of $k$-DP-colorability for any $k \leq 2\rho(G)$.

		\begin{custompro}{\bf\ref{pro:sparse}}
            For any $\epsilon > 0$ and $n_0\in\N$, there is a graph $G$ with $n \geq n_0$ vertices such that $\rho(G) \geq \left( \ln n / \ln\ln n\right)^{1/3}$ but, for every $k\leq 2\rho(G)$, $G$ is $k$-DP-colorable with probability less than $\epsilon$.
		\end{custompro}
        \begin{proof}
            Fix $\epsilon > 0$ and let $q \in \mathbb{N}$ be sufficiently large as a function of $\epsilon$. Consider the graph $G=tK_q$, the disjoint union of $t$ copies of $K_q$, where
            \[
                t \,=\, \ln(1/\epsilon) \, (q-1)!^{{q}\choose2}.
            \]
            Let us denote the copies of $K_q$ in $G$ by $K^1$, \ldots, $K^t$. Note that $\rho = \rho(G) = (q-1)/2$. Suppose that $k \leq 2\rho = q-1$. For each $r \in [t]$, let $A_r$ be the event that in the cover $\mathcal{H}(G,k)$, the vertices $(u,i)$ and $(v,i)$ are adjacent for all $uv \in E(K^r)$ and $i \in [k]$. Note that if $A_r$ happens for some $r \in [t]$, then $G$ is not $\mathcal{H}(G,k)$-colorable (because it is impossible to color $K^r$). The events $A_1$, \ldots, $A_t$ are mutually independent, so
            \begin{align*}
                \mathbb{P}(\text{$G$ is $\mathcal{H}(G,k)$-colorable}) \,\leq\, \mathbb{P}\left(\bigcap_{r \in [t]} \overline{A_r}\right) \,&=\, \left(1 - (k!)^{-{q \choose 2}}\right)^t \,<\, \exp\left(- t (k!)^{-{q \choose 2}}\right) \,\leq\, \epsilon.
            \end{align*}
            %and each of them takes place with probability $(k!)^{-{n \choose 2}}$. %, because the chromatic number of $K$ is $n > k$.
            It remains to observe, using the bound $(q-1)! \leq q^q$ and assuming $q > \ln(1/\epsilon)$, that %the number of vertices of $G$ is
            \[
                |V(G)|\,=\, tq \,=\, \ln(1/\epsilon) \, (q-1)!^{{q \choose 2}}\,q \,\leq\, (q-1)!^{{q \choose 2}} \, q^2 \, \leq\, q^{q^3/2},
            \]
            and we have, for large enough $q$,
            \[
                \left(\frac{\ln q^{q^3/2}}{\ln \ln q^{q^3/2}}\right)^{1/3} \,=\, \left(\frac{q^3\ln q}{2(3\ln q - \ln 2 + \ln \ln q)}\right)^{1/3} \,\leq\, \left(\frac{q^3}{6}\right)^{1/3} \,=\, \frac{q}{6^{1/3}} \,<\, \frac{q-1}{2} \,=\, \rho. \qedhere
            \]
            \end{proof}

        \section{DP-colorability with probability close to $1$ for dense graphs}\label{sec:kCol}

		\noindent Now we prove Theorem~\ref{thm:slidingsharpness}, which we state here again for convenience: %Now that we have bounded the probability that a graph $G$ is non-$k$-DP-colorable for $k\leq \rho(G)/\ln\rho(G)$, we will next bound the probability that the graph is $k$-DP-colorable for $k\geq (1+\epsilon)\rho(G)/\ln\rho(G)$.

		\begin{customthm}{\bf\ref{thm:slidingsharpness}}
            For all $\epsilon > 0$ and $s \in [0, 1/3)$, there is $n_0 \in \N$ such that the following holds. Suppose $G$ is a graph with $n \geq n_0$ vertices such that $\rho(G)\geq n^{1-s}$, and
            \[
                k \,\geq\, \left(1+\epsilon\right)\left(1+\frac{s}{1-2s}\right)\frac{\rho(G)}{\ln \rho(G)}.
            \]
            Then $G$ is $k$-DP-colorable with probability at least $1-\epsilon$.
		\end{customthm}

		\begin{proof} 
			Without loss of generality, we assume that $\epsilon < 1$. Consider an arbitrary graph $G$ with $n$ vertices, $m$ edges, and maximum density $\rho=\rho(G)$ such that $\rho\geq n^{1-s}$, and let $k$ satisfy the bound in the statement of the theorem.  Throughout the proof, we shall treat $\epsilon$ and $s$ as fixed, while $n$ will be assumed to be sufficiently large as a function of $\epsilon$ and $s$. In particular, when we employ asymptotic notation, such as $O(\cdot)$, $o(\cdot)$, and $\Omega(\cdot)$, it will always be with respect to $n \to \infty$, while the implied constants will be allowed to depend on $\epsilon$ and $s$. As usual, the asymptotic notation $\tilde{O}(\cdot)$ and $\tilde{\Omega}(\cdot)$ hides polylogarithmic factors.
   
            We enumerate all transversals of $\mathcal{H}(G,k)=(L,H)$ as $\{I_i\}_{i=1}^{k^n}$. Throughout the proof, the variables $i$ and $j$ will range over $[k^n]$. Let $E_i$ be the event that $I_i$ is independent and let $X_i$ be the indicator random variable for the event $E_i$. Define $X=\sum_{i=1}^{k^n} X_i$, so $X$ is the random variable that counts the $\mathcal{H}(G,k)$-colorings of $G$. For convenience, we set $p=(k-1)/k$. Clearly, \[\E(X)\,=\,k^n\left(\frac{k-1}{k}\right)^m \,=\, k^n p^m.\]
			Notice that
			\begin{align}
				\Var(X) \,=\,\sum_{(i,j)}\Cov(X_i,X_j) \,=\, \sum_{(i,j)}\left(\E(X_iX_j)-\E(X_i)\E(X_j)\right) \,=\, \sum_{(i,j)}\E(X_iX_j)  \,-\, \E(X)^2. \label{eq:variance}
			\end{align}
			Now, for each $(i,j)\in[k^n]^2$, we need to compute $\E(X_iX_j)$. To this end, for every edge $uv\in E(G)$, let $I_i\cap L(u) = \{(u,i_u)\}$, $I_i\cap L(v) = \{(v,i_v)\}$, $I_j\cap L(u) = \{(u,j_u)\}$, and $I_j\cap L(v) = \{(v,j_v)\}$. 
            Define $A_{uv}$ as the event $(u,i_u)(v,i_v)\not\in E(H)$ and $B_{uv}$ as the event $(u,j_u)(v,j_v)\not\in E(H)$, and let $E_{uv}=A_{uv}\cap B_{uv}$. %For convenience, we set $p=(k-1)/k$.
            Now we consider three cases.
            \begin{enumerate}[label=(\alph*)]
                \item\label{case:a} $i_u= j_u$ and $i_v= j_v$.
            \end{enumerate}
            In this case, we have $\mathbb{P}(E_{uv})= (k-1)/k = p$.
    \begin{enumerate}[label=(\alph*),resume]
                \item\label{case:b} $i_u\neq j_u$ and $i_v= j_v$, or $i_u= j_u$ and $i_v\neq j_v$.
            \end{enumerate}
			In this case, regardless of whether $i_v = j_v$ or $i_u = j_u$, we have%in either situation,
			\begin{equation*}
				\mathbb{P}(E_{uv}) \,=\, %\mathbb{P}(A_{uv})\cdot\mathbb{P}(B_{uv}|A_{uv}) \,=\, \left(\frac{k-1}{k}\right) \left(\frac{k-2}{k-1}\right)\,=\
                \frac{k-2}{k}\,=\,p^2-\frac{1}{k^2} \,\leq\, p^2.
			\end{equation*}
    \begin{enumerate}[label=(\alph*),resume]
               \item\label{case:c} $i_u\neq j_u$ and $i_v\neq j_v$.
            \end{enumerate}
			To compute $\mathbb{P}(E_{uv})$ in this case, we let $C_{uv}$ be the event $(u,i_u)(v,j_v)\in E(H)$.  Note that $C_{uv}\subseteq B_{uv}$ since $E_H(L(u),L(v))$ is a matching.  %(u,i_u)(v,j_v)\in E(H)$ implies $(u,j_u)(v,j_v)\not\in E(H)$ for $i_u\neq j_u$.
            Therefore, %We know $\mathbb{P}(E_{uv})=\mathbb{P}(A_{uv})\cdot\mathbb{P}(B_{uv}|A_{uv})$.  From this,
			\begin{align*}
				\mathbb{P}(E_{uv})\,&=\,\mathbb{P}(A_{uv}) \cdot\mathbb{P}(B_{uv}|A_{uv}) \\%\mathbb{P}(A_{uv})\cdot\mathbb{P}\left((B_{uv}\cap C_{uv})\cup(B_{uv}\cap\overline{C_{uv}})|A_{uv}\right)\\
				%&=\,%\mathbb{P}(A_{uv})\cdot\left(\mathbb{P}\left((B_{uv}\cap C_{uv})|A_{uv}\right)+\mathbb{P}\left((B_{uv}\cap\overline{C_{uv}})|A_{uv}\right)\right)\\
                &=\,\mathbb{P}(A_{uv}) \cdot\left(\mathbb{P}\left( C_{uv}|A_{uv}\right) +\mathbb{P}\left(B_{uv} \cap \overline{C_{uv}} |A_{uv}\right)\right) \\
				&=\,\mathbb{P}(A_{uv})\cdot\left(\mathbb{P}\left( C_{uv}|A_{uv}\right) +\mathbb{P}\left(\overline{C_{uv}}|A_{uv}\right)\cdot\mathbb{P}\left(B_{uv}|A_{uv}\cap \overline{C_{uv}}\right)\right)\\
				&=\,\left(\frac{k-1}{k}\right) \left(\frac{1}{k-1}+\left(\frac{k-2}{k-1}\right)^2\right)\,=\,p^2+\frac{1}{k^2(k-1)}.
			\end{align*}
   
			Let the number of edges in $E(G)$ satisfying the conditions from cases \ref{case:a}, \ref{case:b}, and \ref{case:c} be $\alpha_{i,j}$, $\beta_{i,j}$, and $\gamma_{i,j}$, respectively. Since the matchings in $H$ corresponding to different edges of $G$ are chosen independently, we conclude that
			\begin{equation*}
				\E(X_iX_j) \,\leq\,p^{\alpha_{i,j}}p^{2\beta_{i,j}}\left(p^2+\frac{1}{k^2(k-1)}\right)^{\gamma_{i,j}}.
			\end{equation*}
			Using \eqref{eq:variance} and the fact that $\alpha_{i,j}+\beta_{i,j}+\gamma_{i,j}=m$, we now obtain
			\begin{align*}
				\frac{\Var(X)}{\E(X)^2}\,&=\, \frac{\sum_{(i,j)}\E(X_iX_j)}{k^{2n}p^{2m}} - 1\\
				&\leq\,k^{-2n}p^{-2m}\sum_{(i,j)}p^{\alpha_{i,j}}p^{2\beta_{i,j}}\left(p^2+\frac{1}{k^2(k-1)}\right)^{\gamma_{i,j}} - 1\\
				%&=\frac{\sum_{\nu=0}^{n}\left[\sum_{\left\{(i,j):|I_i\cap I_j|=\nu\right\}}\left(\left(\frac{p}{p^2}\right)^{\alpha_{i,j}}\left(\frac{p^2-\frac{1}{k^2}}{p^2}\right)^{\beta_{i,j}}\left(\frac{p^2+\frac{1}{k^2(k-1)}}{p^2}\right)^{\gamma_{i,j}}-\left(\frac{p^2}{p^2}\right)^{m}\right)\right]}{k^{2n}}\\
                &=\,k^{-2n}\sum_{(i,j)}p^{-\alpha_{i,j}}\left(\frac{p^2+\frac{1}{k^2(k-1)}}{p^2}\right)^{\gamma_{i,j}} - 1\\
				&=\,k^{-2n}\sum_{(i,j)}\left(1+\frac{1}{k-1}\right)^{\alpha_{i,j}}\left(1+\frac{1}{(k-1)^3}\right)^{\gamma_{i,j}} - 1.
			\end{align*}
            Note that, for given $i$ and $j$, $\alpha_{i,j}$ is exactly the number of edges of $G$ with both endpoints in the set $\{v\in V(G):L(v)\cap I_i= L(v)\cap I_j\}$. Since the density of every subgraph of $G$ is at most $\rho$, we conclude that if $|I_i \cap I_j| = \nu$, then $\alpha_{i,j} \leq \mu(\nu)$, where
            \[
                \mu(\nu) \,=\, \min\left\{{\nu \choose 2},\, \rho \nu\right\} \,\leq\, \min \{\nu^2, \, \rho \nu\}.
            \]
            Also, $\gamma_{i,j} \leq m \leq \rho n$. As there are ${n\choose \nu}k^n(k-1)^{n-\nu}$ pairs $(i,j)$ with $|I_i\cap I_j|=\nu$, we have
			\begin{align*}
				\frac{\Var(X)}{\E(X)^2}
				\,%&\leq\,\left(1+\frac{1}{(k-1)^3}\right)^{\rho n}\sum_{\nu=0}^n\left[\frac{{ n\choose \nu}(k-1)^{n-\nu}}{k^{n}}\left(1+\frac{1}{k-1}\right)^{\min\{{\nu \choose 2}, \rho \nu\}}\right] \,-\, 1\\
                &\leq\,k^{-2n}\sum_{\nu=0}^n\left[{n\choose \nu}k^n(k-1)^{n-\nu}\left(1+\frac{1}{k-1}\right)^{\mu(\nu)}\left(1+\frac{1}{(k-1)^3}\right)^{\rho n}\right] - 1\\
                &=\, \left(1+\frac{1}{(k-1)^3}\right)^{\rho n}\sum_{\nu=0}^n\left[{ n\choose \nu}{k^{-\nu}}\left(1 - \frac{1}{k}\right)^{n - \nu}\left(1+\frac{1}{k-1}\right)^{\mu(\nu)}\right] \,-\, 1.
			\end{align*}
            For ease of notation, we define
			\begin{equation*}
				g(\nu)\,=\,{ n\choose \nu}{k^{-\nu}}\left(1 - \frac{1}{k}\right)^{n - \nu}\left(1+\frac{1}{k-1}\right)^{\mu(\nu)}.
			\end{equation*}
			Note that $g(\nu) = 0$ for $\nu \notin [0:n]$. Observe that, since $\rho \geq n^{2/3}$ and $k = \tilde{\Omega}(\rho)$,
            \[
                \left(1+\frac{1}{(k-1)^3}\right)^{\rho n} \,\leq\, %\frac{1}{1 - \frac{\rho n}{(k-1)^3}}
                \exp\left(\frac{\rho n}{(k-1)^3}\right) \,=\, 1 + \tilde{O}\left(\frac{n}{\rho^2}\right) \,=\, 1 + \tilde{O}(n^{-1/3}) \,=\, 1 + o(1).
            \]
            (Recall that the asymptotic notation here is with respect to $n \to \infty$.) Therefore,
            \begin{equation}\label{eq:VarBound}
                \frac{\Var(X)}{\E(X)^2}
				\,\leq\,(1 + o(1))\sum_{\nu=0}^ng(\nu) \,-\, 1.
            \end{equation}
            The key to our analysis is to divide the summation according to the magnitude of $\nu$. Define %$\delta = (1-3s)/4$, 
            \[
                \delta \,=\, \frac{1 - 3s}{4},
            \]%We will now analyze $\Var(X)/\mathbb{E}(X)^2$ by splitting the sum into two parts.
            and note that $\delta > 0$ since $s < 1/3$.
            
			\begin{cl}\label{claimIneq} The following asymptotic bounds hold as $n \to \infty$:%inequalities hold for sufficiently large $n$:
				\begin{equation*}
					\sum_{\nu \,\leq\, n^{s + \delta}}g(\nu) \,\leq\, 1 + o(1), \qquad \sum_{n^{s + \delta} \,<\, \nu \,\leq\, k - 1} g(\nu) \,=\, o(1), \qquad \text{and} \qquad \sum_{\nu = k}^{n} g(\nu)\,=\, o(1).
				\end{equation*}
			\end{cl}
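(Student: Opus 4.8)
The plan is to estimate $g(\nu)$ on each of the three ranges separately, using the two different bounds available for $\mu(\nu)$ — namely $\mu(\nu) \le \nu^2$ for small $\nu$ and $\mu(\nu) \le \rho\nu$ for larger $\nu$ — together with the hypothesis $k \ge (1+\epsilon)\bigl(1+\tfrac{s}{1-2s}\bigr)\rho/\ln\rho$. First I would record the elementary bounds $\binom{n}{\nu} \le (en/\nu)^\nu$, $(1-1/k)^{n-\nu} \le 1$, and $1 + \tfrac{1}{k-1} \le \exp(\tfrac{1}{k-1}) \le \exp(\tfrac{2}{k})$ for $k$ large, so that
\[
    g(\nu) \,\le\, \left(\frac{en}{\nu k}\right)^{\nu} \exp\!\left(\frac{2\mu(\nu)}{k}\right).
\]
The term $\nu = 0$ contributes $(1-1/k)^n = 1 + o(1)$ all by itself — this is where the ``$1 + o(1)$'' in the first bound of the claim comes from — so the real work is to show the remaining terms sum to $o(1)$.

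For the first range, $1 \le \nu \le n^{s+\delta}$, I would use $\mu(\nu) \le \nu^2 \le \nu \cdot n^{s+\delta}$, so that $g(\nu) \le \bigl(\tfrac{en}{\nu k}\exp(2n^{s+\delta}/k)\bigr)^{\nu}$. Since $k = \tilde\Omega(\rho) \ge \tilde\Omega(n^{1-s})$ and $\delta = \tfrac{1-3s}{4}$, one checks $s + \delta < 1 - s$, so $n^{s+\delta}/k = o(1)$ and the exponential factor is $1+o(1)$; meanwhile $en/(\nu k) = O(n/k) = \tilde O(n^s)$, which is only polynomial in $n$. Hence each term is at most $\bigl(\tilde O(n^s)\bigr)^\nu$, which is not summably small term-by-term, so here I would instead bound the whole sum by comparison with a geometric-type series: the ratio $g(\nu+1)/g(\nu)$ is roughly $\tfrac{en}{(\nu+1)k} < 1$ once $\nu \gtrsim n/k = \tilde O(n^s) \ll n^{s+\delta}$, so the sum over $1 \le \nu \le n^{s+\delta}$ is dominated by its largest term near $\nu \approx n/k$, and that term is $\exp\bigl(-\Omega(n/k)\bigr) = \exp(-\tilde\Omega(n^s))$ when $s > 0$, or can be handled directly when $s = 0$. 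Either way this range contributes $o(1)$.

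For the second range, $n^{s+\delta} < \nu \le k-1$, I switch to $\mu(\nu) \le \rho\nu$, giving
\[
    g(\nu) \,\le\, \left(\frac{en}{\nu k}\exp\!\left(\frac{2\rho}{k}\right)\right)^{\nu}.
\]
Now $\exp(2\rho/k)$ is a bounded constant (since $k = \tilde\Omega(\rho)$, in fact $\rho/k = O(\ln\rho/((1+\epsilon)(1+\tfrac{s}{1-2s}))) $ — wait, that's not $o(1)$, so I must be more careful: I use instead $1 + \tfrac{1}{k-1} \le \exp(\tfrac{1}{k-1})$ only to get $\exp(\rho\nu/(k-1))$, and then compare $\ln g(\nu) \le \nu\bigl(\ln(en/\nu) - \ln k + \rho/(k-1)\bigr)$). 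The point is that the exponent per unit $\nu$ is $\ln(en/(\nu k)) + \rho/(k-1)$; since $\nu > n^{s+\delta}$ and $k = \tilde\Omega(n^{1-s})$ we have $n/(\nu k) \le n^{1 - (s+\delta) - (1-s)}\cdot\mathrm{polylog} = n^{-\delta}\cdot\mathrm{polylog} \to 0$, so $\ln(en/(\nu k)) \to -\infty$, and this dominates the bounded term $\rho/(k-1) = O(1)$. Thus $\ln g(\nu) \le -\Omega(\nu\ln n) \le -\Omega(n^{s+\delta}\ln n)$ uniformly on this range, and summing the at-most-$n$ terms still gives $o(1)$. (The precise constant $1 + \tfrac{s}{1-2s}$ in the hypothesis on $k$ is exactly what is needed to make the analogous computation work in the genuinely critical regime — the third range below — but for ranges one and two the cruder estimates suffice.)

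For the third range, $k \le \nu \le n$, the factor $1 + \tfrac{1}{k-1}$ is raised to the power $\mu(\nu) \le \rho\nu$, but now I can no longer afford to be wasteful: I keep $\mu(\nu) \le \rho\nu$ and write $\ln g(\nu) \le \nu\bigl(\ln(en/\nu) - \ln k + \rho/(k-1)\bigr) + (n-\nu)\ln(1-1/k)$, and crucially the hypothesis $k \ge (1+\epsilon)\bigl(1+\tfrac{s}{1-2s}\bigr)\rho/\ln\rho$ together with $\rho \ge n^{1-s}$ gives $\rho/(k-1) \le \tfrac{\ln\rho}{(1+\epsilon)(1+s/(1-2s))}(1 + o(1))$ and $\ln k = \ln\rho - \ln\ln\rho + O(1) = (1-s)\ln n + O(\ln\ln n)$, so that $-\ln k + \rho/(k-1) \le -\bigl((1-s) - \tfrac{1-s}{1+s/(1-2s)}\bigr)\ln n + o(\ln n)$; a short computation shows $(1-s) - \tfrac{1-s}{1+s/(1-2s)} = (1-s)\cdot\tfrac{s/(1-2s)}{1+s/(1-2s)} > 0$, so the bracket is $\le \ln(en/\nu) - c\ln n + o(\ln n)$ with $c > 0$, which is negative once $\nu \ge n^{1-c+o(1)}$, in particular for all $\nu \ge k = \tilde\Omega(n^{1-s})$ provided $1 - s < 1 - c + o(1)$, i.e. provided $c < s + o(1)$ — and one checks $c = (1-s)\cdot\tfrac{s}{1-s} = s$ exactly (using $1 + \tfrac{s}{1-2s} = \tfrac{1-s}{1-2s}$), so $c = s$ and the inequality is tight, handled by absorbing the $o(1)$ slack into the $(1+\epsilon)$ factor. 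Hence $\ln g(\nu) \le -\Omega(\nu\ln n)$ for $\nu \ge k$, and summing gives $o(1)$, completing the claim.

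I expect the main obstacle to be the bookkeeping in the third range: one must track the constants through $\ln k = \ln(\rho/\ln\rho)$, $\rho \ge n^{1-s}$, and the exact value $1 + \tfrac{s}{1-2s} = \tfrac{1-s}{1-2s}$ carefully enough to see that the $(1+\epsilon)$ factor in the hypothesis is precisely what converts a borderline-zero exponent into a genuinely negative one, uniformly over $k \le \nu \le n$. The first two ranges are comparatively routine once one commits to bounding the sum by its largest term (via monotonicity of the ratio $g(\nu+1)/g(\nu)$) rather than term by term.
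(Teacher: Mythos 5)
Your high-level plan --- split at $n^{s+\delta}$ and at $k$, using $\mu(\nu)\le\nu^2$ for small $\nu$ and $\mu(\nu)\le\rho\nu$ for large $\nu$, with the precise constant in front of $\rho/\ln\rho$ only mattering for the third range --- matches the paper, and your treatment of the range $\nu\ge k$ is essentially on track. However, there are two genuine gaps in the first two ranges.

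\textbf{Range $\nu\le n^{s+\delta}$.} Discarding the factor $(1-1/k)^{n-\nu}$ at the outset is fatal. With $n/k=\tilde\Theta(n^s)$, the quantity $g(0)=(1-1/k)^n=e^{-\tilde\Theta(n^s)}$ is in fact exponentially small, not $1+o(1)$ as you assert; the $1+o(1)$ must come from the whole sum, not from any single term. And once you drop $(1-1/k)^{n-\nu}$, your bound $g(\nu)\le(en/(\nu k))^\nu(1+o(1))^\nu$ gives $g(1)=\tilde O(n^s)$ and a maximal term near $\nu\approx n/k$ of size $\exp(+\Omega(n/k))$, not $\exp(-\Omega(n/k))$ as you claim --- the sign is reversed, and the resulting sum is exponentially \emph{large}. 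The paper instead keeps the factor $(1-1/k)^{n-\nu}$, absorbs $(1+\tfrac{1}{k-1})^{\nu n^{s+\delta}}$ into a correction $(1+2n^{-1+2s+2\delta})^\nu$ of the $k^{-\nu}$ term, and then extends the sum to all $\nu\in[0:n]$ and applies the binomial theorem to get $\bigl(1+\tfrac{2n^{-1+2s+2\delta}}{k}\bigr)^n=1+o(1)$. No ``dominated by the largest term'' reasoning is needed or correct here --- the mass of the underlying binomial distribution is spread over $\Theta(\sqrt{n/k})$ terms, each of size $\Theta(\sqrt{k/n})$, and the binomial theorem handles the whole thing in one line.

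\textbf{Range $n^{s+\delta}<\nu\le k-1$.} Switching to $\mu(\nu)\le\rho\nu$ here is a mistake: since $\nu<k<\rho$ throughout this range, one has $\nu^2<\rho\nu$, so $\mu(\nu)\le\rho\nu$ is the \emph{weaker} of the two bounds. Having chosen it, you then assert $\rho/(k-1)=O(1)$, but with $k=\Theta(\rho/\ln\rho)$ one has $\rho/(k-1)=\Theta(\ln\rho)=\Theta(\ln n)$, which is of exactly the same order as your negative term $\ln\bigl(en/(\nu k)\bigr)=-\delta\ln n+O(\ln\ln n)$ at the bottom of the range; whether the sum of the two is negative then depends on constants you have not checked (and, for small $\epsilon$, it is not). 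The paper avoids this entirely by observing that $\mu(\nu)\le\nu^2\le\nu(k-1)$ for $\nu\le k-1$, so $(1+\tfrac{1}{k-1})^{\mu(\nu)}\le e^\nu$ --- a bounded constant per unit $\nu$ --- and then $g(\nu)\le\bigl(e^2 n/(\nu k)\bigr)^\nu\le n^{-\delta\nu/2}$ decays fast enough. This is the bound you want in this range.

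Your range-three computation, including the observation that $1+\tfrac{s}{1-2s}=\tfrac{1-s}{1-2s}$ and the role of the $(1+\epsilon)$ slack in making the exponent strictly negative, is essentially the paper's argument, modulo the imprecision of replacing $\ln\rho$ by $(1-s)\ln n$ (one should keep $\ln\rho\ge(1-s)\ln n$ as an inequality).
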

   Claim~\ref{claimIneq} and inequality \eqref{eq:VarBound} imply that $\Var(X)/\mathbb{E}(X)^2=o(1)$, so, by Chebyshev's inequality, %we know that
            \begin{equation*}
                \mathbb{P}(X>0) \,=\,1-\mathbb{P}(X=0) \,\geq\, 1-\frac{\Var(X)}{\mathbb{E}(X)^2} \,\geq\, 1-o(1),
            \end{equation*}
            and hence this probability exceeds $1- \epsilon$ for all large enough $n$. 
            Consequently, our proof of Theorem~\ref{thm:slidingsharpness} will be complete once we verify Claim~\ref{claimIneq}.

            To prove the first bound in Claim~\ref{claimIneq}, we note that for $\nu \leq n^{s+\delta}$, we have $\mu(\nu) \leq \nu^2 \leq \nu n^{s+\delta}$. Hence, for $\nu \leq n^{s+\delta}$,
            \begin{align*}
               g(\nu) \,\leq\, { n\choose \nu}{k^{-\nu}}\left(1 - \frac{1}{k}\right)^{n - \nu}\left(1+\frac{1}{k-1}\right)^{\nu n^{s+\delta}}.
            \end{align*}
            Using the bounds $1 + x \leq e^x\leq 1 + 2x$ valid for all $x \in (0,1)$, we write
            \[
                \left(1+\frac{1}{k-1}\right)^{n^{s+\delta}} \,\leq\, \exp\left(\frac{n^{s+\delta}}{k-1}\right) \,\leq\, \exp\left(n^{-1 + 2s  + 2\delta}\right) \,\leq\, 1 + 2n^{-1 + 2s + 2\delta},
            \]
            where the second inequality holds for all large enough $n$ since $k = \tilde
{\Omega}(n^{1-s})$. Thus,
            \[
                g(\nu) \,\leq\, {n \choose \nu} \left(\frac{1 + 2n^{-1 + 2s + 2\delta}}{k}\right)^{\nu} \left(1 - \frac{1}{k}\right)^{n - \nu}.
            \]
            By the binomial formula,
            \[
                \sum_{\nu \,\leq\, n^{s + \delta}} g(\nu) \,\leq\, \sum_{\nu = 0}^n  {n \choose \nu} \left(\frac{1 + 2n^{-1 + 2s + 2\delta}}{k}\right)^{\nu} \left(1 - \frac{1}{k}\right)^{n - \nu} \,=\, \left(1 + \frac{2n^{-1 + 2s + 2\delta}}{k}\right)^n.
            \]
            Using again that $k = \tilde{\Omega}(n^{1-s})$ and assuming that $n$ is large enough, we obtain
            \[
                \left(1 + \frac{2n^{-1 + 2s + 2\delta}}{k}\right)^n \,\leq\, \left(1 + n^{-2 + 3s + 3\delta}\right)^n \,\leq\, 1 + n^{-1 + 3s + 3\delta} \,=\, 1 + o(1),
            \]
            where in the last step we use that $3\delta < 1 - 3s$.

            For the second part of Claim~\ref{claimIneq}, note that $\mu(\nu) \leq \nu (k-1)$ for $\nu \leq k - 1$. Since $(1+1/x)^x < e$ for all $x > 0$ and ${n \choose \nu} \leq (en/\nu)^\nu$ for all $\nu \in [0 : n]$, we see that for $n^{s+\delta}<\nu \leq k-1$,
            \begin{align*}
                g(\nu) \,\leq\, {n\choose \nu}{k^{-\nu}}\left(1+\frac{1}{k-1}\right)^{\nu(k-1)} \,\leq\, \left(\frac{e^2 n}{\nu k}\right)^\nu \,\leq\, \left(\frac{e^2 n^{1 - s - \delta}}{k}\right)^\nu \,\leq\, n^{-\delta \nu/2} \,\leq\, n^{-\delta\, n^{s+\delta}/2},
            \end{align*}
            where the second to last inequality holds for all large enough $n$ since $k = \tilde{\Omega}(n^{1-s})$. Therefore,
            \[
                \sum_{n^{s + \delta} \,<\, \nu \,\leq\, k - 1} g(\nu) \,\leq\, n \cdot n^{-\delta\, n^{s+\delta}/2} \,=\, n^{-\delta\, n^{s+\delta}/2 + 1} \,=\, o(1).
            \]
            %because $\delta \nu/2 > \delta n^{s+\delta}/2$ goes to infinity with $n$.

            Finally, we consider the case $\nu \geq k$. This is the only part of the proof where the parameter $\rho$ is used and where the specific constant factor in front of $\rho/\ln \rho$ in the lower bound on $k$ plays a role. Given $\nu \geq k$, we use the bound $\mu(\nu) \leq \rho \nu$ to write
            \[
                g(\nu) \,\leq\, {n\choose \nu}{k^{-\nu}}\left(1+\frac{1}{k-1}\right)^{\rho \nu}.
            \]
            Since $1/(1-\epsilon/2) < 1 + \epsilon$ for $0 < \epsilon < 1$, for all large enough $n$, we have
            \[
                k - 1 \,\geq\, (1 + \epsilon) \frac{(1-s)\rho}{(1-2s)\ln \rho} - 1 \,\geq\, \frac{(1-s)\rho}{(1 - \epsilon/2)(1-2s)\ln \rho}.
            \]
            Since $1 + x \leq e^x$ for all $x  \in \mathbb{R}$, we conclude that
            \[
            \left(1+\frac{1}{k-1}\right)^{\rho} \,\leq\, \exp\left(\frac{\rho}{k-1}\right) \,\leq\, \exp\left(\frac{(1 - \epsilon/2)(1-2s) \ln \rho}{1-s}\right)\,=\, \rho^{\frac{(1 - \epsilon/2)(1-2s)}{1-s}}.
            \]
            %Note that
            %\[
            %    1 - \frac{(1 - \epsilon/2)(1-2s)}{1-s} \,=\, 1 - \frac{1-2s }{1-s} + \frac{\epsilon(1-2s) }{2(1-s)}.
            %\]
            Therefore,
            \[
                \frac{1}{k}\, \rho^{\frac{(1 - \epsilon/2)(1-2s)}{1-s}} \,\leq\, \frac{(1 - 2s) \ln \rho}{(1+\epsilon)(1 - s) \rho}\, \rho^{\frac{(1 - \epsilon/2)(1-2s)}{1-s}}  %\exp\left(\frac{(1 - \epsilon/2)(1-2s) \ln \rho}{1-s}\right)
                \,=\, \frac{(1 - 2s) \ln \rho}{(1+\epsilon)(1 - s)} \, \rho^{-\frac{s}{1-s} - \frac{\epsilon(1-2s) }{2(1-s)}} \,\leq\, n^{-s - \epsilon(1-2s)/4},
            \]
            where the last inequality holds for large $n$ since $\rho \geq n^{1-s}$. Thus, for each $\nu \geq k$,
            \[
                g(\nu) \,\leq\, \left(\frac{en}{\nu} \cdot n^{-s - \epsilon(1-2s)/4}\right)^\nu \,\leq\, n^{-\epsilon(1-2s)\nu/8},
            \]
            as $k = \tilde{\Omega}(n^{1-s})$. Hence,
            \[
                \sum_{\nu = k}^n g(\nu) \,\leq\, n \cdot n^{-\epsilon(1-2s)\nu/8} \,=\, n^{-\epsilon(1-2s)\nu/8 + 1} \,=\, o(1),
            \]
            and the proof is complete.
		\end{proof}

    \section{DP-colorability based on degeneracy}\label{sec:degeneracy}

        \subsection{Greedy Transversal Procedure}\label{subsec:prelimLem}

		\noindent Consider a graph $G$ and an ordering of its vertices $(v_i)_{i\in[n]}$.  Let $d_i^-$ be \emph{the back degree of $v_i$}, which is equal to the number of neighbors of $v_i$ whose indices are less than $i$.  Recall that a graph $G$ is {$d$-degenerate} if there exists some ordering of vertices in $V(G)$ such that $d_i^-\leq d$ for all $i\in[n]$, and the {degeneracy} of a graph $G$ is the smallest $d$ such that $G$ is $d$-degenerate.

        In order to prove Theorems~\ref{Degeneracy} and \ref{thm:fractionalDegeneracy}, we shall employ the following greedy procedure to select a $b$-fold transversal from a given cover:

		\begin{proc}[Greedy Transversal Procedure (GT Procedure)]\label{proc:GT}

			This procedure takes as input a $d$-degenerate graph $G$, an $a$-fold cover $\mathcal{H}=(L,H)$ of $G$ with $L(v) = \{(v,j) : j \in [a]\}$ for all $v \in V(G)$, and $b \in \N$ satisfying $b \leq a$. % $b\leq|L(v)|$ for all $v\in V(G)$.
            The procedure gives as output a $b$-fold transversal $T$ for $\mathcal{H}$. For ease of notation we let $n=|V(G)|$. For $(v,j) \in L(v)$, we call $j$ the \emph{index} of $(v,j)$.
			\begin{enumerate}

				\item Order the vertices $V(G)$ as $(v_i)_{i\in[n]}$ so that $d_i^-\leq d$ for all $i\in[n]$.

				\item Initialize $T=\emptyset$ and repeat the following as $i$ goes from $1$ to $n$.
				\begin{enumerate}

					\item Let $L'(v_i)=L(v_i)\setminus N_H(T)$. %The vertices in $L'(v_i)$ will be called \emph{available}.

                    \item If $|L'(v_i)|\geq b$, form a $b$-element subset $T(v_i) \subseteq L'(v_i)$ by selecting $b$ elements of $L'(v_i)$ with the smallest indices; otherwise, let $T(v_i) = \{(v_i,j) : j \in [b]\}$.
					%\begin{enumerate}

					%	\item If $|L'(v_i)|\geq b$, form a $b$-element subset $T(v_i) \subseteq L'(v_i)$ by selecting $b$ elements with the smallest indices. %let $T_i$ be the set of the $b$ elements of $L'(v_i)$ with the smallest indices.  %choose any $b$-set of these vertices $T_i\subseteq L'(v_i)$.

					%	\item If $|L'(v_i)|<b$, let $T(v_i) = \{(v_i,j) : j \in [b]\}$.  %choose a $b$-set of the vertices $T_i\subseteq L(v_i)$.
					%\end{enumerate}

					\item Replace $T$ with $T\cup T(v_i)$.

				\end{enumerate}

				\item Output $T$.

			\end{enumerate}

		\end{proc}
        By construction, the output of this procedure is a $b$-fold transversal for $\mathcal{H}$. Furthermore, this transversal is independent if and only if $|L'(v_i)| \geq b$ for all $i \in [n]$.\footnote{Alternatively, we could say that the procedure fails if $|L'(v_i)| < b$ for some $i$. However, for the ensuing analysis it will be more convenient to assume that the procedure runs for exactly $n$ steps and that the sets $L'(v_i)$ and $T(v_i)$ are well defined for all $i$.} %if for any $i\in[n]$ we arrive at $|L'(v_i)|<b$, then the GT Procedure outputs a $b$-fold transversal that is not independent.

        A crucial role in our analysis %of the GT Procedure
        is played by the notion of negative correlation. A collection $(Y_i)_{i\in [k]}$ of $\{0,1\}$-valued random variables is \emph{negatively correlated} if for every subset $I\subseteq[k]$, we have
		\begin{equation*}
			\mathbb{P}\left(\bigcap_{i\in I}\{Y_i = 1\}\right) \,\leq\,\prod_{i\in I}\mathbb{P}(Y_i = 1).
		\end{equation*}
        This notion is made useful by the fact that sums of negatively correlated random variables satisfy Chernoff--Hoeffding style bounds, as discovered by Panconesi and Srinivasan~\cite{PS}. The exact formulation we use is from the paper \cite{Molloy} by Molloy: %as stated by Bernshteyn, Brazelton, Cao, and Kang~\cite{B2}.  

		\begin{lem}[{\cite[Theorem 3.2]{PS}, \cite[Lemma 3]{Molloy}}]\label{Bernshteyn1}
			Let $(X_i)_{i\in[k]}$ be $\{0,1\}$-valued random variables.  Set $Y_i=1-X_i$ and $X=\sum_{i\in[k]}X_i$.  If $(Y_i)_{i\in [k]}$ are negatively correlated, then
			\begin{equation*}
				\mathbb{P}\left(X<\mathbb{E}(X)-t\right) \,<\,\exp\left(-\frac{t^2}{2\mathbb{E}(X)}\right) \quad \text{for all } 0 < t \leq \mathbb{E}(X).
			\end{equation*}
		\end{lem}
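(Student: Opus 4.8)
The plan is to carry out the classical Chernoff--Cram\'er exponential-moment argument on $X$, with the negative correlation of $(Y_i)_{i\in[k]}$ playing the role that independence plays in the standard proof. Fix a parameter $\lambda > 0$ and estimate $\mathbb{E}(e^{-\lambda X})$. Since each $X_i$ is $\{0,1\}$-valued and $X_i = 1 - Y_i$, we have $e^{-\lambda X_i} = e^{-\lambda}\bigl(1 + (e^\lambda - 1)Y_i\bigr)$, hence $e^{-\lambda X} = e^{-\lambda k}\prod_{i\in[k]}\bigl(1 + (e^\lambda - 1)Y_i\bigr)$. Expanding the product over subsets $I \subseteq [k]$ gives $\sum_{I}(e^\lambda - 1)^{|I|}\prod_{i\in I}Y_i$, in which every coefficient $(e^\lambda - 1)^{|I|}$ is nonnegative because $\lambda > 0$; this is exactly where negative correlation enters, as it lets us bound each term $\mathbb{E}\bigl(\prod_{i\in I}Y_i\bigr) = \mathbb{P}\bigl(\bigcap_{i\in I}\{Y_i = 1\}\bigr) \leq \prod_{i\in I}\mathbb{E}(Y_i)$ in the right direction. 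Re-collecting the sum yields $\mathbb{E}(e^{-\lambda X}) \leq e^{-\lambda k}\prod_{i\in[k]}\bigl(1 + (e^\lambda - 1)\mathbb{E}(Y_i)\bigr)$.

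The next step is a short algebraic simplification that is the heart of the matter: writing $p_i = \mathbb{E}(X_i) = 1 - \mathbb{E}(Y_i)$, one checks the identity $e^{-\lambda}\bigl(1 + (e^\lambda - 1)(1-p_i)\bigr) = 1 - (1-e^{-\lambda})p_i$, so the factor $e^{-\lambda k}$ is absorbed and we get $\mathbb{E}(e^{-\lambda X}) \leq \prod_{i\in[k]}\bigl(1 - (1-e^{-\lambda})p_i\bigr) \leq \exp\bigl(-(1-e^{-\lambda})\sum_{i\in[k]} p_i\bigr) = \exp\bigl(-(1-e^{-\lambda})\mathbb{E}(X)\bigr)$, using $1-x \leq e^{-x}$. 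Thus $X$ obeys the same exponential-moment bound it would if the $X_i$ were independent, and crucially the right-hand side involves $\mathbb{E}(X)$ rather than $\mathbb{E}(Y) = k - \mathbb{E}(X)$. Markov's inequality applied to $e^{-\lambda X}$ then gives, for every $\lambda > 0$,
\[
    \mathbb{P}(X \leq \mathbb{E}(X) - t) \,\leq\, \exp\bigl(\lambda(\mathbb{E}(X) - t) - (1-e^{-\lambda})\mathbb{E}(X)\bigr).
\]

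It remains to optimize in $\lambda$ and apply an elementary inequality. Assume $0 < t < \mathbb{E}(X)$ (the case $t = \mathbb{E}(X)$ is trivial since then $\{X < \mathbb{E}(X) - t\} = \{X < 0\}$ has probability $0$), set $\mu = \mathbb{E}(X)$ and $\delta = t/\mu \in (0,1)$. The exponent above is minimized at $\lambda = -\ln(1-\delta) > 0$, which yields $\mathbb{P}(X \leq \mu - t) \leq \exp\bigl(-\mu\bigl((1-\delta)\ln(1-\delta) + \delta\bigr)\bigr)$. Finally, letting $h(\delta) = (1-\delta)\ln(1-\delta) + \delta - \delta^2/2$, one has $h(0) = 0$ and $h'(\delta) = -\ln(1-\delta) - \delta \geq 0$ on $[0,1)$ (since $\ln(1-\delta) \leq -\delta$), so $(1-\delta)\ln(1-\delta) + \delta \geq \delta^2/2$, and therefore $\mathbb{P}(X < \mathbb{E}(X) - t) \leq \mathbb{P}(X \leq \mathbb{E}(X) - t) \leq \exp(-t^2/(2\mathbb{E}(X)))$; the strictness of the stated inequality follows by noting that $1 - x \leq e^{-x}$ is strict unless every $p_i \in \{0,1\}$, in which case $X$ is constant and the left-hand side equals $0$.

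This is essentially the textbook exponential-moment method, so I do not anticipate a genuine obstacle; the only thing requiring care is the bookkeeping in the expansion step --- ensuring the coefficients $(e^\lambda - 1)^{|I|}$ are nonnegative so that negative correlation is applied in the correct direction, and tracking the $e^{-\lambda k}$ prefactor so that $\mathbb{E}(X)$, and not $\mathbb{E}(Y)$, appears in the final exponent.
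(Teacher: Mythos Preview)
Your argument is correct and is essentially the standard Panconesi--Srinivasan exponential-moment proof. Note, however, that the paper does not prove this lemma at all: it is quoted from \cite{PS} and \cite{Molloy} and used as a black box, so there is no ``paper's own proof'' to compare against. Your write-up supplies exactly the proof one finds in those references --- expand $e^{-\lambda X}$ as a polynomial in the $Y_i$ with nonnegative coefficients, apply negative correlation termwise, collapse back to the independent-case moment generating bound, and then run the usual Chernoff optimization. The only minor remark is that your strictness justification via $1-x<e^{-x}$ is slightly roundabout; it is cleaner to observe that for $0<\delta<1$ the function $h(\delta)=(1-\delta)\ln(1-\delta)+\delta-\delta^2/2$ is strictly positive (you already computed $h'(\delta)=-\ln(1-\delta)-\delta>0$ on $(0,1)$), which gives the strict inequality directly without a case split on the $p_i$.
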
 

		Now consider running the GT Procedure on a $d$-degenerate graph $G$ and the random $a$-fold cover $\mathcal{H}(G,a) = (L,H)$, producing a $b$-fold transversal $T$. %, constructed by applying the GT Procedure to a $d$-degenerate graph $G$, a random $a$-fold cover $\mathcal{H}(G,a)$, and a natural number $b$.
        %We define the following random variables that will be used to count the number of available vertices from each list, and then will be used to determine whether the GT Procedure does or does not output an independent $b$-fold transversal.
        For each $i\in[n]$ and $j\in[a]$, let $X_{i,j}$ be the indicator random variable of the event $\{(v_i,j) \in L'(v_i)\}$ and let $Y_{i,j} = 1 - X_{i,j}$. 
		%\begin{equation*}
		%	X_{i,j} \,=\, \begin{cases}
        %        1 &\text{if } (v_i,j)\in L'(v_i),\\
        %        0 &\text{if } (v_i,j)\notin L'(v_i).
        %    \end{cases}
		%\end{equation*}
        %

		\begin{lem}\label{lem:negCor}
			Consider the set of random variables $X_{i,j}$ as defined above.
            \begin{enumerate}[label={\normalfont(\roman*)}]
                \item\label{item:expectation} For all $i \in [n]$ and $j \in [a]$, we have
                $
                 \displaystyle   \mathbb{E}(X_{i,j}) \,\geq\, \left(1-\frac{b}{a}\right)^d.
                $
                \item\label{item:correlation} For each $i \in [n]$, the variables $(Y_{i,j})_{j\in[a]}$ are negatively correlated.
            \end{enumerate}
		\end{lem}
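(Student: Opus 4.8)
The plan is to analyze, for a fixed step $i$ of the GT Procedure, how the matchings associated with the $d_i^- \le d$ back-edges of $v_i$ determine which elements of $L(v_i)$ are knocked out by $N_H(T)$. The crucial observation is that at the moment step $i$ begins, the set $T$ has already been built from the first $i-1$ vertices, and in particular for each neighbor $v_\ell$ of $v_i$ with $\ell < i$, the $b$-element set $T(v_\ell) \subseteq L(v_\ell)$ is already fixed; moreover, the random matching $M_e$ on the edge $e = v_\ell v_i$ has not yet been examined and is therefore still a uniformly random perfect matching between $L(v_\ell)$ and $L(v_i)$, independent of everything revealed so far. Conditioning on the state at the start of step $i$ (i.e.\ on $T$ restricted to $v_1,\dots,v_{i-1}$), the event $\{(v_i,j) \notin L'(v_i)\}$, i.e.\ $\{Y_{i,j} = 1\}$, is exactly the event that for at least one back-neighbor $v_\ell$ of $v_i$, the matching $M_{v_\ell v_i}$ sends some vertex of $T(v_\ell)$ to $(v_i,j)$.

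For part~\ref{item:expectation}: fix $i$, $j$, and condition on the state at the start of step $i$. For a single back-edge $e = v_\ell v_i$, the probability that $M_e$ matches $(v_i,j)$ to a vertex of the fixed $b$-set $T(v_\ell)$ is exactly $b/a$ (the partner of $(v_i,j)$ under a uniform perfect matching is uniform over $L(v_\ell)$). The matchings on distinct back-edges are mutually independent, so the conditional probability that $(v_i,j)$ survives all back-edges is $(1 - b/a)^{d_i^-} \ge (1-b/a)^d$. Averaging over the conditioning gives $\mathbb{E}(X_{i,j}) \ge (1-b/a)^d$, as claimed. (One should note $T(v_\ell)$ is determined by $\mathcal{H}$ restricted to the earlier matchings, hence independent of $M_{v_\ell v_i}$; this independence is the only subtle point, and it holds because the procedure never inspects $M_{v_\ell v_i}$ before step $i$.)

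For part~\ref{item:correlation}: again fix $i$ and condition on the start-of-step-$i$ state. We must show that for any $J \subseteq [a]$, $\mathbb{P}\!\left(\bigcap_{j \in J}\{Y_{i,j}=1\}\right) \le \prod_{j\in J}\mathbb{P}(Y_{i,j}=1)$, and since conditional negative correlation for every value of the conditioning implies unconditional negative correlation (the bound $\mathbb{P}(\bigcap) \le \prod \mathbb{P}$ is preserved under averaging the left side while the right side only gets larger by correlation of the conditioning — more carefully, one argues $\mathbb{E}[\prod_{j\in J} Y_{i,j}] = \mathbb{E}\big[\mathbb{E}[\prod_{j\in J}Y_{i,j} \mid \text{state}]\big] \le \mathbb{E}\big[\prod_{j\in J}\mathbb{E}[Y_{i,j}\mid\text{state}]\big]$, and one checks the conditional factors $\mathbb{E}[Y_{i,j}\mid\text{state}] = 1-(1-b/a)^{d_i^-}$ are in fact \emph{constant} in $j$ given the state, so the right-hand side equals $\prod_{j\in J}\mathbb{P}(Y_{i,j}=1)$ exactly), it suffices to prove it conditionally. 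Conditionally, $Y_{i,j} = 1$ iff some back-edge matching hits $(v_i,j)$ with an element of the corresponding fixed $b$-set; across independent back-edges this is a union, so it is enough to show that for a \emph{single} uniform perfect matching $M$ between $L(v_\ell)$ (with a fixed $b$-subset $S$) and $L(v_i)$, the indicator variables $\big(\mathbf{1}[M(v_i,j) \in S]\big)_{j\in[a]}$ are negatively correlated; independence across back-edges then upgrades this to the union. This single-matching negative correlation is the combinatorial heart of the argument: for $j_1,\dots,j_r$ distinct, $\mathbb{P}\big(M(v_i,j_t)\in S \text{ for all } t\big) = \frac{b(b-1)\cdots(b-r+1)}{a(a-1)\cdots(a-r+1)} \le (b/a)^r = \prod_t \mathbb{P}(M(v_i,j_t)\in S)$, because $\frac{b-t}{a-t} \le \frac{b}{a}$ for $0 \le t < b \le a$.

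The main obstacle I expect is being careful about \emph{what} one conditions on and verifying the claimed independence between the already-constructed partial transversal $T(v_1),\dots,T(v_{i-1})$ and the fresh matchings on the back-edges of $v_i$ — this requires setting up a clean filtration (reveal the matchings in the order the procedure uses them) so that at step $i$ the back-edge matchings are genuinely uniform and independent given the past. Once that is pinned down, both parts reduce to the elementary hypergeometric-type inequality $\prod_{t=0}^{r-1}\frac{b-t}{a-t} \le (b/a)^r$ together with the independence of matchings on distinct edges of $G$.
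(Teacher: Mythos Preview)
Your plan for part~\ref{item:expectation} is correct and essentially identical to the paper's argument: once you observe that the sets $T(u_r)$ are determined by matchings disjoint from those on the back-edges of $v_i$, each $S_r=\{j:(v_i,j)\in N_H(T(u_r))\}$ is a uniform $b$-subset of $[a]$, independently across $r$, and the bound follows.

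For part~\ref{item:correlation}, your conditioning/filtration setup is fine (and the observation that $\mathbb{E}[Y_{i,j}\mid\text{state}]=1-(1-b/a)^{d_i^-}$ is deterministic makes the passage from conditional to unconditional NC clean). The gap is the step ``independence across back-edges then upgrades this to the union.'' You establish, for a \emph{single} $r$, the hypergeometric inequality $\mathbb{P}(\forall j\in J:\,j\in S_r)=\tfrac{(b)_{|J|}}{(a)_{|J|}}\leq(b/a)^{|J|}$, i.e.\ negative correlation of $(\mathbf 1[j\in S_r])_j$. But negative correlation is \emph{not} known to be preserved under taking coordinatewise maxima of independent families; that closure property holds for negative \emph{association}, not for the weaker NC condition you invoke. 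So the reduction to a single matching is not justified as stated. The fix is either to upgrade your single-matching statement to negative association (the indicators of a uniform random $b$-subset are NA by the Joag-Dev--Proschan theorem; NA is closed under independent concatenation and under applying increasing functions of disjoint coordinates, so $(\max_r\mathbf 1[j\in S_r])_j$ is NA, hence NC), or to argue directly about the union $S=\bigcup_r S_r$.

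The paper takes the second route: it never reduces to a single matching. Instead, it proves directly that for any $I\subseteq[a]$ and $j\notin I$, $\mathbb{P}(I\subseteq S\mid j\notin S)\geq\mathbb{P}(I\subseteq S)$ via an explicit coupling: conditioned on $j\notin S$, each $S_r$ is uniform in the $b$-subsets of $[a]\setminus\{j\}$; one couples this to the unconditioned $S_r$ by replacing $j$ (when present) with a uniform element of $[a]\setminus S_r$, which can only enlarge $S\setminus\{j\}$. This yields $\mathbb{P}(Y_{i,j}=1\mid\bigcap_{j'\in I}\{Y_{i,j'}=1\})\leq\mathbb{P}(Y_{i,j}=1)$, and iterating gives NC.
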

		\begin{proof}
            Similar statements have appeared in the literature; see, e.g., \cites[272]{Molloy}[Lemma~4.3]{BKZ}[Lemma 3.2]{B2}. Since our setting is somewhat different from these references, we include the proof here for completeness.
            
            Fix any $i \in [n]$. Write $t = d^-_i$ and let $u_1$, \ldots, $u_t$ be the neighbors of $v_i$ that precede it in the ordering $(v_i)_{i\in[n]}$. The sets $T(u_1)$, \ldots, $T(u_t)$ are constructed in a way that is probabilistically independent from the matchings in $H$ corresponding to the edges $u_1v_i$, \ldots, $u_tv_i$. Therefore, %the sets
            \[
                S_r \,=\, \{j \in [a] \,:\, (v_i, j) \in N_H(T(u_r))\}
            \]
            for $r \in [t]$ are mutually independent uniformly random $b$-element subsets of $[a]$. %Since \[
            %    L'(v_i) \,=\, L(v_i) \setminus \left\{(v_i, j) \,:\, j \in \bigcup_{r \in [t]} S_r\right\}
            %\]
            %by definition, we conclude that %Since $G$ is $d$-degenerate, we know that
            Hence,
		\begin{equation*}
			\mathbb{E}(X_{i,j}) \,=\, \mathbb{P}(j \notin S_r \text{ for all } r \in [t]) \,=\, \left(1-\frac{b}{a}\right)^{t} \,\geq\, \left(1-\frac{b}{a}\right)^d,
		\end{equation*}
    where in the last step we use that $t = d^-_i \leq d$. This proves part \ref{item:expectation}.

            For part \ref{item:correlation}, we first note that if $b = a$, then the output of the GT Procedure on $\mathcal{H}(G,a)$ is deterministic, %the random variables  $Y_{i,j} = 1$ always,
            so the negative correlation holds trivially in this case. Now suppose that $b < a$ and consider any $I \subset [a]$ and $j \in [a] \setminus I$. We claim that
            %\begin{equation}\label{eq:cond_prob_bound}
            %    \mathbb{P}\left( \bigcap_{j' \in I} \{Y_{i,j'} = 1\} \,\middle\vert\, X_{i,j} = 1\right) \,\geq\, \mathbb{P}\left( \bigcap_{j' \in I} \{Y_{i,j'} = 1\} \right),
            %\end{equation}
            %or, equivalently,
            \begin{equation}\label{eq:cond_prob_bound1}
                \mathbb{P}\left( I \subseteq S \,\middle\vert\, j \notin S\right) \,\geq\, \mathbb{P}\left( I \subseteq S \right),
            \end{equation}
            where $S = \bigcup_{r \in [t]} S_r$. 
            To show \eqref{eq:cond_prob_bound1}, pick $b$-element subsets $S_1'$, \ldots, $S_t'$ of $[a] \setminus \{j\}$ independently and uniformly at random and let $S' = \bigcup_{r \in [t]} S_r'$. The event $\{j \notin S\}$ occurs if and only if $j \notin S_r$ for all $r \in [t]$. %Since $S_1$, \ldots, $S_t$ are independent and uniformly random $b$-element subsets of $[a]$, 
            It follows that the distribution of the tuple $(S_1, \ldots, S_t)$ conditioned on the event $\{j \notin S\}$ is the same as the distribution of $(S_1',\ldots,S_t')$. Therefore, we need to argue that
            \[
                \mathbb{P}\left(I \subseteq S'\right) \,\geq\, \mathbb{P}\left(I \subseteq S\right).
            \]
            To this end, we employ a coupling argument. Form $b$-element subsets $S_r'' \subseteq [a] \setminus \{j\}$ as follows: If $j \notin S_r$, then let $S_r'' = S_r$; otherwise, pick $j_r \in [a] \setminus S_r$ uniformly at random and let $S_r'' = (S_r \setminus \{j\}) \cup \{j_r\}$. By construction, $S_1''$, \ldots, $S_t''$ are independent and uniformly random $b$-element subsets of $[a] \setminus \{j\}$, so, letting $S'' = \bigcup_{r \in [t]}S''_r$, we can write
            \begin{align*}
                \mathbb{P}\left(I \subseteq S'\right) \,=\, \mathbb{P}\left(I \subseteq S''\right) \,\geq\, \mathbb{P}\left(I \subseteq S\right),
            \end{align*}
            where the inequality is satisfied because $S'' \supseteq S \setminus \{j\}$. %the only element that can be in $S$ but not in $S''$ is $j$, and $j \notin I$.

            Since for any events $A$, $B$, the inequalities $\mathbb{P}(A | B) \geq \mathbb{P}(A)$ and $\mathbb{P}(B | A) \geq \mathbb{P}(B)$ are equivalent, we conclude from \eqref{eq:cond_prob_bound1} that $\mathbb{P}( j \notin S | I \subseteq S) \geq \mathbb{P}(j \notin S)$, or, equivalently, $\mathbb{P}( j \in S | I \subseteq S) \leq \mathbb{P}(j \in S)$. Using the definition of the set $S$, this inequality can be rewritten as
            \begin{equation}\label{eq:step}
                \mathbb{P}\left( Y_{i,j} = 1 \,\middle\vert\, \bigcap_{j' \in I} \{Y_{i,j'} = 1\}\right) \,\leq\, \mathbb{P}\left( Y_{i,j} = 1 \right).
            \end{equation}
            
            Finally, given any set $I = \{j_1, \ldots, j_k\} \subseteq [a]$, we apply \eqref{eq:step} iteratively with $\{j_1, \ldots, j_{\ell - 1}\}$ in place of $I$ and $j_\ell$ in place of $j$ to obtain the desired inequality
            \[
                \mathbb{P}\left(\bigcap_{j\in I}\{Y_{i,j} = 1\}\right) \,\leq\,\prod_{j\in I}\mathbb{P}(Y_{i,j} = 1). \qedhere
            \]
		\end{proof}

	\subsection{DP-colorability based on degeneracy}\label{sec:upperDegeneracy}

		\noindent We now apply the GT Procedure to prove Theorem~\ref{Degeneracy}, whose statement we repeat here for convenience.

		\begin{customthm}{\bf\ref{Degeneracy}}
			For all $\epsilon \in (0,1/2)$, there is $n_0\in\N$ such that the following holds.  Let $G$ be a graph with $n\geq n_0$ vertices and degeneracy $d$ such that $d\geq \ln^{2/\epsilon}n$ and let $k\geq(1+\epsilon)d / \ln d$.  Then $G$ is $k$-DP-colorable with probability at least $1-\epsilon$.
		\end{customthm}
		\begin{proof}
			We construct a transversal $T$ using the GT Procedure with inputs $G$, $a = k$, the cover $\mathcal{H}(G,k)$, and $b=1$. Our aim is to argue that, with probability at least $1 - \epsilon$, the sets $L'(v_i)$ for all $i \in [n]$ are nonempty, and hence $T$ is independent with probability at least $1 - \epsilon$, as desired.
   
   %If we are able to arrive at some transversal $T=\{t_j\}_{j\in[n]}$ such that for all $i\in[n]$, $|L'(v_i)|\neq0$, then we have found an independent transversal in $\mathcal{H}(G,k)$. Recall that $L'(v_i)$ is as defined in the GT Procedure. 

			Recall from \S\ref{subsec:prelimLem} that for $i \in [n]$ and $j \in [k]$, $X_{i,j}$ is the indicator random variable of the event $\{(v_i,j) \in L'(v_i)\}$. %We will now show that such an independent transversal exists with probability $1-\epsilon$ so long as $n$ is sufficiently large.  We begin by defining the indicator random variables $X_{i,j}$ from \S\ref{subsec:prelimLem}.
            Let
			\begin{equation*}
				X_i \,=\, \sum_{j\in[k]}X_{i,j} \,=\, |L'(v_i)|.
			\end{equation*}
            Using  Lemma~\ref{lem:negCor}\ref{item:expectation} and the linearity of expectation, we get
            \[
                \mathbb{E}(X_i) \,\geq\, k \left(1 - \frac{1}{k}\right)^d \,\geq\, k \, \exp\left(-(1 - o(1))\frac{d}{k}\right) \,>\, d^{2\epsilon/3},
            \]
            where the last inequality holds for $\epsilon < 1/2$ assuming that $n$ is large enough as a function of $\epsilon$ (since then $d$ and $k$ are also large). 
            Combining Lemma~\ref{lem:negCor}\ref{item:correlation} with Lemma~\ref{Bernshteyn1}, we obtain
            \begin{align*}
                \mathbb{P}(X_i = 0) \,\leq\, \mathbb{P}\left(X_i < \frac{\mathbb{E}(X_i)}{2}\right) \,<\, \exp\left(-\frac{\mathbb{E}(X_i)}{8}\right) \,\leq\, e^{-d^{2\epsilon/3}/8}  \,\leq\, e^{-\ln^{4/3}n/8}\,<\, \frac{\epsilon}{n},
            \end{align*}
            where the last inequality holds for large $n$. By the union bound, it follows that
            \[
                \mathbb{P}(X_i = 0 \text{ for some } i \in [n]) \,<\, \epsilon,
            \]
            and we are done.
		 \end{proof}
 
		%Theorem~\ref{thm:slidingsharpness} applies so long as the maximum density is $\Omega\left(\left|V(G_\lambda)\right|\right)$.  Theorem~\ref{Degeneracy} applies so long as the degeneracy is polylogarithmic on the number of vertices, which is equivalent to the maximum density being polylogarithmic on the number of vertices.

		%One obvious question that arises is how sparse can the sequence of graphs be while still getting useful results from this method.  The proof method used in Theorem~\ref{Degeneracy} breaks down for degeneracies less than or equal to $2e\ln\left|V(G_\lambda)\right|$.  If there is to be any hope of showing that a sequence of graphs with smaller degeneracy is DP-colorable w.h.p., different methods will need to be used.

	\subsection{Fractional DP-colorability based on degeneracy}\label{sec:fractional}

        \noindent Here we prove Theorem~\ref{thm:fractionalDegeneracy}:

        \begin{customthm}{\bf\ref{thm:fractionalDegeneracy}}
			For all $\epsilon>0$, there is $d_0\in\N$ such that the following holds.  Let $G$ be a graph with degeneracy $d\geq d_0$ and let $k\geq(1+\epsilon)d/\ln d$.  Then $p^\ast(G,k) \geq 1-\epsilon$.
		\end{customthm}
    \begin{proof}
            Without loss of generality, we may assume that $\epsilon < 1$. Take $a$, $b \in \N$ such that
            \[
                (1 + \epsilon/2)\frac{d}{\ln d} \,\leq\, \frac{a}{b} \,\leq\, k.
            \]
            Here we may---and will---assume that $a$ and $b$ are sufficiently large as functions of $n$. We construct a $b$-fold transversal $T$ using the GT Procedure with inputs $G$ and $\mathcal{H}(G,a)$. Our aim is to show that the sets $L'(v_1)$, \ldots, $L'(v_n)$ all have size at least $b$ with probability at least $1 - \epsilon$, which implies that $T$ is independent with probability at least $1 - \epsilon$, as desired.

            As in the proof of Theorem~\ref{Degeneracy} presented in \S\ref{sec:upperDegeneracy}, we recall that for $i \in [n]$ and $j \in [a]$, $X_{i,j}$ is the indicator random variable of the event $\{(v_i,j) \in L'(v_i)\}$, and define
			\begin{equation*}
				X_i \,=\, \sum_{j\in[a]}X_{i,j} \,=\, |L'(v_i)|.
			\end{equation*}
            Using  Lemma~\ref{lem:negCor}\ref{item:expectation} and the linearity of expectation, we get
            \begin{align*}
                \mathbb{E}(X_i) \,\geq\, a \left(1 - \frac{b}{a}\right)^d \,&\geq\, b \cdot (1 + \epsilon/2)\frac{d}{\ln d} \cdot \left(1 - \frac{\ln d}{(1 + \epsilon/2)d}\right)^d \\
                &\geq\,b \cdot (1 + \epsilon/2)\frac{d}{\ln d} \cdot d^{-1/(1+\epsilon/2)} \, >\, b\, d^{\epsilon/3},
            \end{align*}
            where in the last step we use that $\epsilon < 1$ and $d$ is large as a function of $\epsilon$. In particular, assuming $d$ is large enough, we have $\mathbb{E}(X_i) > 2b$. By Lemma~\ref{lem:negCor}\ref{item:correlation} and Lemma~\ref{Bernshteyn1}, we have
            \begin{align*}
                \mathbb{P}(X_i < b) \,\leq\, \mathbb{P}\left(X_i < \frac{\mathbb{E}(X_i)}{2}\right) \,<\, \exp\left(-\frac{\mathbb{E}(X_i)}{8}\right) \,\leq\, e^{-b/4}  \,<\, \frac{\epsilon}{n},
            \end{align*}
            where for the last inequality we assume that $b$ is large enough as a function of $n$. By the union bound, it follows that
            \[
                \mathbb{P}(X_i < b \text{ for some } i \in [n]) \,<\, \epsilon,
            \]
            and the proof is complete.
    \end{proof}

	\section{DP-thresholds}\label{subsection:Threshold}
 
		\noindent Combining Proposition~\ref{prop:first} and Theorem~\ref{thm:slidingsharpness}, we obtain DP-thresholds and sharp DP-thresholds for sufficiently dense graph sequences:

		\begin{thm}\label{thm:threshold}
			Let $\mathcal{G}=(G_\lambda)_{\lambda\in\N}$ be a sequence of graphs with $|V(G_\lambda)|$, $\rho(G_\lambda) \to\infty$ as $\lambda\to\infty$. Define a function $t_{\mathcal{G}}(\lambda)=\rho(G_\lambda)/\ln \rho(G_\lambda)$.  If \begin{equation}\label{eq:weak_threshold_limit}\lim_{\lambda \to \infty} \frac{\ln \rho(G_\lambda)}{\ln \ln |V(G_\lambda)|} \,=\, \infty,\end{equation}
   then $t_{\mathcal{G}}(\lambda)$ %=\rho(G_\lambda)/\ln \rho(G_\lambda)$
   is a DP-threshold function for $\mathcal{G}$. Furthermore, if \begin{equation}\label{eq:threshold_limit}\lim_{\lambda \to \infty} \frac{\ln \rho(G_\lambda)}{\ln |V(G_\lambda)|} \,=\, 1,\end{equation}
            %Suppose $f(\lambda)$ is a nonnegative function satisfying $\lim_{\lambda\to\infty}f(\lambda)=0$.  If there exists some $\lambda_0\in\N$ such that \[\rho(G_\lambda) \,\geq\, n_\lambda^{1-f(\lambda)}\] for all $\lambda\geq\lambda_0$,
            then $t_{\mathcal{G}}(\lambda)$ %=\rho(G_\lambda)/\ln \rho(G_\lambda)$
            is a sharp DP-threshold function for $\mathcal{G}$.
		\end{thm}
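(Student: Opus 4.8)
The plan is to assemble the statement from three facts already in hand: Proposition~\ref{prop:first} gives the non-colorability (lower) half of both assertions, Theorem~\ref{Degeneracy} gives the colorability (upper) half of the plain DP-threshold assertion under hypothesis \eqref{eq:weak_threshold_limit}, and Theorem~\ref{thm:slidingsharpness} gives the colorability half of the sharp assertion under \eqref{eq:threshold_limit}. Write $n_\lambda = |V(G_\lambda)|$ and $\rho_\lambda = \rho(G_\lambda)$, and note that $\rho_\lambda \to \infty$ forces $t_{\mathcal{G}}(\lambda) = \rho_\lambda/\ln\rho_\lambda \to \infty$. Since ``for every $\epsilon > 0$'' in the (sharp) threshold definitions is implied by ``for every sufficiently small $\epsilon > 0$'', I may take $\epsilon$ as small as convenient.

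First I would dispatch the non-colorability half, which is common to both parts and uses nothing beyond $\rho_\lambda \to \infty$. Suppose $k_\lambda \leq t_{\mathcal{G}}(\lambda)$ for all large $\lambda$; this is implied both by $k_\lambda = o(t_{\mathcal{G}}(\lambda))$ and by $k_\lambda \leq (1-\epsilon)t_{\mathcal{G}}(\lambda)$ for a fixed $\epsilon \in (0,1)$. Fix an arbitrary $\delta > 0$. For all large $\lambda$ we have $\rho_\lambda \geq \exp(e/\delta)$, and either $k_\lambda = 0$ (so $G_\lambda$ has no $\mathcal{H}(G_\lambda,0)$-coloring at all) or $1 \leq k_\lambda \leq \rho_\lambda/\ln\rho_\lambda$, in which latter case Proposition~\ref{prop:first} gives $\mathbb{P}(G_\lambda \text{ is } \mathcal{H}(G_\lambda,k_\lambda)\text{-colorable}) \leq \delta$. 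As $\delta$ was arbitrary this probability tends to $0$, so $\mathcal{G}$ is non-$\kappa$-DP-colorable w.h.p.; this covers the lower half of both claims.

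Next I would treat the colorability half of the plain threshold, assuming \eqref{eq:weak_threshold_limit} and $t_{\mathcal{G}}(\lambda) = o(k_\lambda)$. Fix $\epsilon \in (0,1/2)$; it suffices to show $\mathbb{P}(G_\lambda \text{ colorable}) \geq 1 - \epsilon$ for all large $\lambda$. Let $d_\lambda$ be the degeneracy of $G_\lambda$. Since $\rho_\lambda \leq d_\lambda \leq 2\rho_\lambda$ and $x \mapsto x/\ln x$ is increasing for $x > e$, for large $\lambda$ we get $d_\lambda/\ln d_\lambda \leq 2\rho_\lambda/\ln(2\rho_\lambda) \leq 2t_{\mathcal{G}}(\lambda)$; combined with $t_{\mathcal{G}}(\lambda) = o(k_\lambda)$ this gives $k_\lambda \geq 2(1+\epsilon)t_{\mathcal{G}}(\lambda) \geq (1+\epsilon)d_\lambda/\ln d_\lambda$ for large $\lambda$. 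Also \eqref{eq:weak_threshold_limit} gives $\ln\rho_\lambda \geq (2/\epsilon)\ln\ln n_\lambda$ for large $\lambda$, hence $d_\lambda \geq \rho_\lambda \geq \ln^{2/\epsilon} n_\lambda$. Once $\lambda$ is large enough that also $n_\lambda \geq n_0(\epsilon)$ as in Theorem~\ref{Degeneracy}, that theorem applies and gives the bound; letting $\epsilon \to 0$ shows $\mathbb{P}(G_\lambda\text{ colorable}) \to 1$, so $\mathcal{G}$ is $\kappa$-DP-colorable w.h.p.

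Finally I would treat the colorability half of the sharp threshold, assuming \eqref{eq:threshold_limit} and $k_\lambda \geq (1+\epsilon)t_{\mathcal{G}}(\lambda)$ for all large $\lambda$. Fix $\epsilon'' \in (0,\epsilon)$. Since $(1+\epsilon'')\left(1+\tfrac{s}{1-2s}\right) \downarrow 1 + \epsilon''$ as $s \downarrow 0$, I can pick $s = s(\epsilon,\epsilon'') \in (0,1/3)$ with $(1+\epsilon'')\left(1+\tfrac{s}{1-2s}\right) \leq 1+\epsilon$. Hypothesis \eqref{eq:threshold_limit} gives $\rho_\lambda \geq n_\lambda^{1-s}$ for all large $\lambda$, and for those $\lambda$ we have $k_\lambda \geq (1+\epsilon)t_{\mathcal{G}}(\lambda) \geq (1+\epsilon'')\left(1+\tfrac{s}{1-2s}\right)\rho_\lambda/\ln\rho_\lambda$, so once $n_\lambda \geq n_0(\epsilon'',s)$ as in Theorem~\ref{thm:slidingsharpness}, that theorem gives $\mathbb{P}(G_\lambda\text{ colorable}) \geq 1 - \epsilon''$; letting $\epsilon'' \to 0$ finishes the argument. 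I expect the only real subtlety to lie in this last paragraph: one must fix the target failure probability $\epsilon''$ first, then choose $s$ small as a function of \emph{both} $\epsilon$ and $\epsilon''$ so that the constant $(1+\epsilon'')(1+\tfrac{s}{1-2s})$ appearing in Theorem~\ref{thm:slidingsharpness} stays below $1+\epsilon$, and only afterwards send $\lambda \to \infty$ (so that $n_0(\epsilon'',s)$ is eventually exceeded and $\rho_\lambda \geq n_\lambda^{1-s}$ takes effect); mishandling the order of these choices is the one way the ``sharp'' conclusion could fail to come out. Everything else is routine wiring of the three quoted results together with the degeneracy bounds $\rho \leq d \leq 2\rho$.
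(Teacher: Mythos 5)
Your proposal is correct and follows the same strategy as the paper: deduce the non-colorability half from Proposition~\ref{prop:first}, the plain colorability half from Theorem~\ref{Degeneracy} via the degeneracy bounds $\rho \leq d \leq 2\rho$, and the sharp colorability half from Theorem~\ref{thm:slidingsharpness}. In fact your handling of the sharp part is slightly tidier than the paper's: you fix a single $s = s(\epsilon,\epsilon'')$ before letting $\lambda \to \infty$, whereas the paper works with the $\lambda$-dependent quantity $s_\lambda = 1 - \ln\rho_\lambda/\ln n_\lambda$ and applies Theorem~\ref{thm:slidingsharpness} with $s_\lambda$ in place of $s$, which glosses over the dependence of $n_0$ on $s$; your ordering of quantifiers makes that step unambiguous.
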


		\begin{proof}
			Assume \eqref{eq:weak_threshold_limit}. Set $n_\lambda = |V(G_\lambda)|$ and $\rho_\lambda = \rho(G_\lambda)$. Since $n_\lambda \to \infty$, \eqref{eq:weak_threshold_limit} implies that $\rho_\lambda \to \infty$ as well.
   %
 %       %Given $\epsilon>0$, we will prove two statements.
        %    \begin{enumerate}
        %        \item For any $\kappa=(k_\lambda)_{\lambda\in\N}$  satisfying $k_\lambda\leq(1-\epsilon)\rho(G_\lambda)/\ln \rho(G_\lambda)$ and for any $\epsilon_1>0$, there is a $\Lambda_1\in\N$ such that $G_\lambda$ is non-$k_\lambda$-DP-colorable with probability $1-\epsilon_1$ for all $\lambda\geq\Lambda_1$.
%
        %        \item For any $\kappa=(k_\lambda)_{\lambda\in\N}$  satisfying $k_\lambda\geq(1+\epsilon)\rho(G_\lambda)/\ln \rho(G_\lambda)$ and for any $\epsilon_2>0$, there is a $\Lambda_2\in\N$ such that $G_\lambda$ is $k_\lambda$-DP-colorable with probability $1-\epsilon_2$ for all $\lambda\geq\Lambda_2$.
        %    \end{enumerate} 
 %           
            Take any sequence $\kappa=(k_\lambda)_{\lambda\in\N}$ of positive integers satisfying $k_\lambda\leq t_\mathcal{G}(\lambda)$. Given $\epsilon > 0$, for all large enough $\lambda \in \N$, we have $\rho_\lambda \geq \exp(e/\epsilon)$ and $k_\lambda \leq \rho_\lambda/\ln\rho_\lambda$. Therefore, by Proposition~\ref{prop:first}, $\mathbb{P}(\text{$G_\lambda$ is $\mathcal{H}(G_\lambda,k_\lambda)$-colorable}) \leq \epsilon$ for all large enough $\lambda$, so $\mathcal{G}$ is non-$\kappa$-DP-colorable w.h.p. % we know that $G_\lambda$ will be non-$k_\lambda$-DP-colorable with probability $1-\epsilon_1$ if $\rho(G_\lambda)\geq\exp(\exp(1-\ln\epsilon_1))$ and $k_\lambda\leq\rho(G_\lambda)/\ln\rho(G_\lambda)$.  We know that $k_\lambda\leq(1-\epsilon)\rho(G_\lambda)/\ln\rho(G_\lambda)$.  Since $n_\lambda\to\infty$ and $f(\lambda)\to0$ as $\lambda\to\infty$, and $\rho(G_\lambda)\geq n_\lambda^{1-f(\lambda)}$ for all $\lambda\geq\Lambda$, then there is some $\Lambda_1\in\N$ such that $\rho(G_\lambda)\geq\exp(\exp(1-\ln\epsilon_1))$ for all $\lambda\geq\Lambda_1$.   This completes the proof of the first statement.
            Now suppose a sequence $\kappa=(k_\lambda)_{\lambda\in\N}$ satisfies $k_\lambda/t_\mathcal{G}(\lambda) \to \infty$. Let $d_\lambda$ be the degeneracy of $G_\lambda$ and note that $\rho_\lambda \leq d_\lambda \leq 2\rho_\lambda$. Take any $\epsilon \in (0,1/2)$. By \eqref{eq:weak_threshold_limit}, $d_\lambda \geq \rho_\lambda \geq \ln^{2/\epsilon} n$ for all large enough $\lambda$. Since
            \[
                \lim_{\lambda \to \infty} \left. k_\lambda \middle/ \frac{d_\lambda}{\ln d_\lambda} \right. \,=\, \lim_{\lambda \to \infty} \left. k_\lambda \middle/ \frac{\rho_\lambda}{\ln \rho_\lambda} \right. \,=\, \infty,
            \]
            we can apply Theorem~\ref{Degeneracy} to conclude that $\mathbb{P}(\text{$G_\lambda$ is $\mathcal{H}(G_\lambda,k_\lambda)$-colorable}) \geq 1 - \epsilon$. As $\epsilon$ is arbitrary, it follows that $\mathcal{G}$ is $\kappa$-DP-colorable w.h.p.
                
             To prove the ``furthermore'' part, assume that \eqref{eq:threshold_limit} holds. Take any $\epsilon > 0$ and suppose a sequence $\kappa=(k_\lambda)_{\lambda\in\N}$ satisfies $k_\lambda \geq (1+\epsilon)t_\mathcal{G}(\lambda)$ for all large enough $\lambda$. Consider any $\epsilon' \in (0,\epsilon)$ and set $s_\lambda = 1 - \ln \rho_\lambda/ \ln n_\lambda$, so $\rho_\lambda = n_\lambda^{1 - s_\lambda}$. By \eqref{eq:threshold_limit}, $s_\lambda \to 0$ as $\lambda \to \infty$. Thus, for all large enough $\lambda$,
            \[
                1 + \frac{s_\lambda}{1-2s_\lambda} \,\leq\, \sqrt{1 + \epsilon'}.
            \]
            For every such $\lambda$, we may apply Theorem~\ref{thm:slidingsharpness} with $\epsilon'' = \sqrt{1 + \epsilon'} - 1 < \epsilon'$ in place of $\epsilon$ to conclude that $\mathbb{P}(\text{$G_\lambda$ is $\mathcal{H}(G_\lambda,k_\lambda)$-colorable}) \geq 1 - \epsilon'' \geq 1 - \epsilon'$. Since $\epsilon'$ is arbitrary, it follows that $\mathcal{G}$ is $\kappa$-DP-colorable w.h.p., and the proof is complete.
        %
         %   Let $\epsilon_2'=\min\{\epsilon,\epsilon_2\}$.  Select $\epsilon'>0$ and $s\in(0,1/3)$ such that $(1+\epsilon')\leq\sqrt{1+\epsilon_2'}$ and $(1+s/(1-2s))\leq\sqrt{1+\epsilon_2'}$.  By Theorem~\ref{thm:slidingsharpness}, we know that if $\rho(G_\lambda)\geq n_\lambda^{1-s}$ and $k_\lambda\geq(1+\epsilon')(1+s/(1-2s))\rho(G_\lambda)/\ln\rho(G_\lambda)$, then there is an $n_0\in\N$ such that $G_\lambda$ is $k_\lambda$-DP-colorable with probability $1-\epsilon'$ provided $n_\lambda\geq n_0$.  Since $f(\lambda)\to0$ and $n_\lambda\to\infty$ as $\lambda\to\infty$, we know that there is a $\Lambda_2\in\N$ such that for all $\lambda\geq\Lambda_2$, $f(\lambda)\leq s$, $n_\lambda\geq n_0$, and $\rho(G_\lambda)\geq n_\lambda^{1-f(\lambda)}$.  So,
         %   \begin{equation*}
         %       \rho(G_\lambda)\geq n_\lambda^{1-s}\hspace{0.5in}\text{and}\hspace{0.5in}k_\lambda\geq(1+\epsilon)\frac{\rho(G_\lambda)}{\ln\rho(G_\lambda)}\geq(1+\epsilon')\left(1+\frac{s}{1-2s}\right)\frac{\rho(G_\lambda)}{\ln\rho(G_\lambda)}
          %  \end{equation*}
           % for $\lambda\geq\Lambda_2$.  By Theorem~\ref{thm:slidingsharpness} we know that $G_\lambda$ is $k_\lambda$-DP-colorable with probability $1-\epsilon'$.  This completes the proof since $\epsilon'\leq\epsilon_2$.
		\end{proof}

		%In order to apply Theorem~\ref{thm:threshold} to a few common sequences of graphs, we will provide a quick proof about the equivalence classes of sharp-DP-threshold functions.
        
        The following observation is immediate from the definitions: 
		\begin{obs}\label{obs:equivThresh}
			If $t(\lambda)$ is a sharp DP-threshold function for a sequence of graphs $\mathcal{G}=(G_\lambda)_{\lambda\in\N}$ and $t'(\lambda)$ is another function such that $\lim_{\lambda\to\infty}t'(\lambda)/t(\lambda)=1$, then $t'(\lambda)$ is also a sharp DP-threshold function for $\mathcal{G}$.
		\end{obs}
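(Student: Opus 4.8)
The plan is to simply unwind the definition of a sharp DP-threshold function, using the hypothesis $t'(\lambda)/t(\lambda) \to 1$ to absorb a multiplicative factor close to $1$ into the $\epsilon$-slack built into that definition. I would fix an arbitrary $\epsilon > 0$ and verify the two defining conditions with $t'$ in place of $t$.

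For the first condition, I would take a sequence $\kappa = (k_\lambda)_{\lambda \in \N}$ with $k_\lambda \leq (1-\epsilon) t'(\lambda)$ for all large enough $\lambda$, and then choose $\delta > 0$ small enough that $(1-\epsilon)(1+\delta) < 1$ (any $\delta < \epsilon/(1-\epsilon)$ works), setting $\epsilon_- = 1 - (1-\epsilon)(1+\delta) > 0$. Since $t'(\lambda)/t(\lambda) \to 1$, for all sufficiently large $\lambda$ one has $t'(\lambda) \leq (1+\delta) t(\lambda)$, whence $k_\lambda \leq (1-\epsilon)(1+\delta) t(\lambda) = (1-\epsilon_-) t(\lambda)$. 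Applying the sharp-threshold property of $t$ with $\epsilon_-$ in the role of $\epsilon$ then gives that $\mathcal{G}$ is non-$\kappa$-DP-colorable w.h.p.

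The second condition I would handle symmetrically: given $k_\lambda \geq (1+\epsilon) t'(\lambda)$ for all large $\lambda$, I would pick $\delta > 0$ with $(1+\epsilon)(1-\delta) > 1$, set $\epsilon_+ = (1+\epsilon)(1-\delta) - 1 > 0$, use $t'(\lambda) \geq (1-\delta) t(\lambda)$ for large $\lambda$ to obtain $k_\lambda \geq (1+\epsilon_+) t(\lambda)$, and invoke the sharp-threshold property of $t$ with $\epsilon_+$ in place of $\epsilon$ to conclude that $\mathcal{G}$ is $\kappa$-DP-colorable w.h.p. Since $\epsilon$ was arbitrary, this shows $t'$ is a sharp DP-threshold function for $\mathcal{G}$. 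I expect no genuine obstacle here; the only point needing a word of care is that the index beyond which the inequalities hold may depend on $\epsilon$ (through $\delta$), which is harmless because each w.h.p.\ statement only concerns the limit $\lambda \to \infty$.
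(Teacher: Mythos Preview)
Your proposal is correct and is exactly the argument the paper has in mind: the paper states the observation as ``immediate from the definitions'' and gives no proof, and what you have written is precisely the routine $\epsilon$--$\delta$ unwinding that justifies that remark.
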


		We will now use this to find threshold functions for specific sequences of graphs.

		\begin{customcor}{\bf\ref{cor:Kn}}
			For $\mathcal{G}=(K_n)_{n\in\N}$, the sequence of complete graphs, $t_{\mathcal{G}}(n)=n/(2\ln n)$ is a sharp DP-threshold function.
		\end{customcor}
		\begin{proof}
			The maximum density of $K_n$ is $\rho_n = (n-1)/2$. 
			%\begin{equation*}
			%	\rho_n=\frac{{n\choose 2}}{n}=\frac{n-1}{2}.
			%\end{equation*}
			By Theorem~\ref{thm:threshold}, a sharp DP-threshold function for $\mathcal{G}$ is $\rho_n/\ln \rho_n = (n-1)/(2 \ln((n-1)/2))$. Since
			%\begin{equation*}
			%	t_{\mathcal{G}}(n)=\frac{\rho_n}{\ln \rho_n}=\frac{n-1}{2\ln\left(\frac{n-1}{2}\right)}=\frac{n-1}{2\ln(n-1)-2\ln 2}.
			%\end{equation*}
			%Next, we will consider the limit
			\begin{equation*}
				\lim_{n\to\infty}\left.\frac{n}{2\ln n}\middle/\frac{n-1}{2\ln\left(\frac{n-1}{2}\right)}\right. \,=\,1,
			\end{equation*}
			we see that $n/(2\ln n)$ is a sharp DP-threshold function for $\mathcal{G}$ by Observation~\ref{obs:equivThresh}. %, $t_{\mathcal{G}}(n)=n/(2\ln n)$ is a sharp DP-threshold function of $K_n$.
		\end{proof}

		%The above theorem leads to a slightly cleaner formula for a sharp DP-threshold function for $K_n$.  Now, let us look at the class of complete multipartite graphs.

		\begin{customcor}{\bf\ref{cor:Knn...}}
			For $\mathcal{G}=(K_{m \times n})_{n\in\N}$ with constant $m\geq2$, the sequence of complete $m$-partite graphs with $n$ vertices in each part, $t_{\mathcal{G}}(n)=(m-1)n/(2\ln n)$ is a sharp DP-threshold function.
		\end{customcor}
		\begin{proof}
			%The proof will proceed similarly to Corollary~\ref{cor:Kn}.  First, we know that
   The maximum density of $K_{m \times n}$ is $\rho_n = (m-1)n/2$. This is a linear function of $n$, so we may apply Theorem~\ref{thm:threshold} to conclude that 
			%\begin{equation*}
			%	\rho_n=\rho(K_{m\times n})=\frac{{m\choose 2}n^2}{mn}=\frac{(m-1)n}{2}.
			%\end{equation*}
%
%			From Theorem~\ref{thm:threshold}, we know that
a sharp DP-threshold function for $\mathcal{G}$ is
			\begin{equation*}
				%\tilde{t}_{\mathcal{G}}(n)=
    \frac{\rho_n}{\ln\rho_n}\,=\,\frac{(m-1)n}{2\ln\left(\frac{(m-1)n}{2}\right)}. %=\frac{(m-1)n}{2\ln n+2\ln(m-1)-2\ln 2}
			\end{equation*}
			The desired result follows by Observation~\ref{obs:equivThresh} since %Claim: $t_{\mathcal{G}}(n)=\left((m-1)n\right)/(2\ln n)$ is a sharp DP-threshold function for complete $m$-partite graphs with equal partition sizes.  To prove this claim, consider the limit
			\begin{equation*}
				\lim_{n\to\infty}\left.\frac{(m-1)n}{2 \ln n}\middle/ \frac{(m-1)n}{2\ln\left(\frac{(m-1)n}{2}\right)} \right. \,=\,1. \qedhere
			\end{equation*}
			%, $t_{\mathcal{G}}(n)=\left((m-1)n\right)/(2\ln n)$ is a sharp DP-threshold function for the sequence of complete $m$-partite graphs $\mathcal{G}=(K_{m\times n})_{n\in\N}$.
		\end{proof}

        \medskip

        \noindent \textbf{Acknowledgment.} We are grateful to the anonymous referee for helpful comments.

    \printbibliography

\end{document}